\DeclarePairedDelimiter\abs{\lvert}{\rvert}
\theoremstyle{plain}
\newtheorem{mainthm}{Theorem} 
\newtheorem{thm}{Theorem}[section] 
\newtheorem{cor}[thm]{Corollary} 
\newtheorem{prop}[thm]{Proposition} 
\newtheorem*{conj}{Conjecture}
\theoremstyle{definition}
\newtheorem{dfn}[thm]{Definition} 
\theoremstyle{remark}
\newtheorem{rem}[thm]{Remark} 
\theoremstyle{definition} 
\newtheorem{exmp}{Example}[section] 
\begin{document}

\bigskip

\title[Edwards' Speculation and a New Var. Method for Zeros $Z$-Function]{On Edwards' Speculation and a New Variational Method for the Zeros of the $Z$-Function} 
\date{\today}

\author{Yochay Jerby}

\address{Yochay Jerby, Faculty of Sciences, Holon Institute of Technology, Holon, 5810201, Israel}
\email{yochayj@hit.ac.il}


%
%

\begin{abstract}

In his foundational book, Edwards introduced a unique "speculation" regarding the possible theoretical origins of the Riemann Hypothesis, based on the properties of the Riemann-Siegel formula.  Essentially Edwards asks whether one can find a method to transition from zeros of $Z_0(t)=cos(\theta(t))$, where $\theta(t)$ is Riemann-Siegel theta function, to zeros of $Z(t)$, the Hardy $Z$-function.  However, when applied directly to the classical Riemann-Siegel formula, it faces significant obstacles in forming a robust plausibility argument for the Riemann Hypothesis. 

In a recent work, we introduced an alternative to the Riemann-Siegel formula that utilizes series acceleration techniques. In this paper, we explore Edwards' speculation through the lens of our accelerated approach, which avoids many of the challenges encountered in the classical case. Our approach leads to the description of a novel variational framework for relating zeros of $Z_0(t)$ to zeros of $Z(t)$ through paths in a high-dimensional parameter space $\mathcal{Z}_N$, recasting the RH as a modern non-linear optimization problem.
\end{abstract}

\maketitle
%
%

\section{Introduction}
\label{s:s3}

 The Riemann zeta function is given by $\zeta(s) = \sum_{n=1}^{\infty} n^{-s}$ in the range $Re(s)>1$. In 1859, Bernhard Riemann published a work of monumental importance \cite{R}, whose starting point is the analytic extension of $\zeta(s)$ to a meromorphic function with a single pole at $s=1$. This allowed him to describe 
various intricate relationships between the distribution of prime numbers and the analytical properties of $\zeta(s)$. Within this  manuscript, Riemann posited what would become one of the most famous and enduring conjectures in mathematics: 

\begin{conj}[RH]
All non-trivial zeros of $\zeta(s)$ lie on the critical line $Re(s) = 1/2$.
\end{conj}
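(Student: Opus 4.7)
The plan is to reduce the Riemann Hypothesis to a statement about the real-zero locus of the Hardy $Z$-function $Z(t)$, and then to deform the problem back to the completely understood trivial case. Since $\zeta(s)$ satisfies the functional equation and is nonvanishing for $\mathrm{Re}(s)>1$ (and hence, by reflection, for $\mathrm{Re}(s)<0$ outside trivial zeros), standard arguments show that RH is equivalent to the assertion that every real zero of $Z(t) = e^{i\theta(t)}\zeta(1/2+it)$ accounts for a zero of $\zeta$ on the critical line, and that $Z$ has no complex zeros off the real axis. So the first step is to work purely with the Hardy function on the real line.

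The second step is to exploit the accelerated Riemann--Siegel representation promised in the abstract. Concretely, I would write $Z(t)$ as a finite sum $\sum_{k=0}^{N-1} c_k(t)\cos(\theta(t)-\phi_k(t))$ plus a remainder that is provably negligible for the range of $t$ under consideration, where the phase $\theta(t)$ is the Riemann--Siegel theta function and the $\phi_k,c_k$ come from the acceleration scheme. The reference function $Z_0(t)=\cos(\theta(t))$ arises as the $k=0$ truncation, and its zeros (the Gram points) are real and trivially understood. The parameter space $\mathcal{Z}_N$ is then the space of admissible coefficient tuples, with $Z_0$ and $Z$ both represented as points of $\mathcal{Z}_N$.

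The third step, which is the heart of the proposal, is to construct a continuous path $\gamma:[0,1]\to\mathcal{Z}_N$ with $\gamma(0)$ the point representing $Z_0$ and $\gamma(1)$ the point representing $Z$, and to show that along the entire path the corresponding function $Z_{\gamma(s)}(t)$ has only real zeros in a given window. Edwards' speculation in this formulation becomes a concrete continuation problem: track each Gram-point zero of $Z_0$ as $s$ increases and verify that it stays real. The variational framework enters by characterizing $\gamma$ as a critical path of an energy functional on $\mathcal{Z}_N$ that penalizes the imaginary part of any zero; under suitable convexity or monotonicity of this energy, the minimizing path exists and zeros cannot leave the real axis.

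The main obstacle, as I see it, is the one Edwards himself flagged: bifurcation. A pair of real zeros can, in principle, collide and split into a conjugate pair of complex zeros during the deformation. One must therefore prove a global transversality statement, ruling out such collisions for \emph{every} admissible path between $Z_0$ and $Z$, or equivalently showing that the locus in $\mathcal{Z}_N$ where $Z_{\gamma(s)}$ acquires a non-real zero is a codimension-at-least-two subvariety that can be avoided. Establishing this while maintaining quantitative control of the accelerated remainder is where the work will concentrate, and where the advantage of the accelerated formula over the classical Riemann--Siegel expansion should be decisive.
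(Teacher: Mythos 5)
The statement you are asked about is the Riemann Hypothesis itself, and the paper does not prove it --- nor does your proposal. What the paper actually establishes is an \emph{equivalence} (Theorem~1 / Corollary~8.4): RH holds if and only if, for each $n$, there exists \emph{some} path $\gamma$ in $\mathcal{Z}_{[t_n/2]}$ from $\overline{a}=\overline{0}$ to $\overline{a}=\overline{1}$ along which $t_n(r)$ never collides with $t_{n\pm1}(r)$. The only substantive lemma proved is the local-realness statement (Theorem~8.3): by the reflection symmetry $\overline{\theta(\overline{t})}=\theta(t)$, zeros of $Z_N(t;\gamma(r))$ can leave the real axis only in conjugate pairs created by a collision of adjacent real zeros. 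Your proposal reconstructs this reformulation accurately --- the reduction to $Z(t)$, the parameter space $\mathcal{Z}_N$ with $Z_0$ at $\overline{0}$ and (via Spira's high-order approximation, Theorem~7.1) $Z$ at $\overline{1}$, and the continuation of zeros along paths. But when you reach "the heart of the proposal" you defer exactly the step that constitutes the open problem: proving that non-colliding paths exist for every $n$. The paper explicitly does not do this; it exhibits one numerical example ($n=730120$) where the linear path fails and a hand-tuned two-stage path succeeds, and reserves the general analysis of the discriminant $\Delta_n(\overline{a})$ for future work.

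Two specific claims in your final paragraph would also fail. First, the collision locus is \emph{not} of codimension at least two in $\mathcal{Z}_N$: a collision of two real zeros followed by escape into a conjugate pair is a codimension-one event in a space of real coefficients (the paper models it as a discriminant \emph{hypersurface} $\Delta_n(\overline{a})=0$), and the paper's own Example in Section~9 shows the linear path genuinely crossing it twice (near $r=0.2425$ and $r=0.985$). So avoidance is not generic and cannot be had for free by a dimension count; a path must be constructed around the hypersurface. Second, you ask to rule out collisions for \emph{every} admissible path, which is strictly stronger than what the equivalence requires (existence of one good path per $n$) and is in fact false, again by the linear-path example. If you recast your proposal as a proof of the paper's Theorem~1 rather than of RH, the argument you need is just Theorem~8.3 plus the observation that permanent departure from the real axis along \emph{every} path would contradict the realness of $\widetilde{t}_n$ guaranteed by RH (and conversely), which is essentially what the paper does.
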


The RH, though only briefly mentioned in Riemann's manuscript, holds profound implications and its resolution would represent a significant breakthrough. It would not only provide a deeper understanding of the distribution of prime numbers but also substantiate crucial links across various domains of mathematics and physics with implications extending to number theory, algebra, cryptography, and quantum physics, see \cite{Borwein2008Riemann, RHP} and references therein.

However, it is noteworthy that within his ground breaking manuscript, Riemann did not describe any methods for the practical computation of $\zeta(s)$ for values in the complex domain, a fact that further contributed to the mystery of the conjecture. Indeed, one comes to the zen-like question: how can one locate the zeros of a function whose values one does not know how to efficiently compute altogether? 

For such reasons, in the early 20-th century, mathematicians such as G. H. Hardy \cite{Hardy1915} and E. Landau \cite{Landau1908} expressed the idea that the Riemann hypothesis was most likely the result of a heuristic guess rather than based on a concrete verification, not to mention on an insight into any conceptual theoretical reason as to why one should expect it to hold.

In the 1930s, C. L. Siegel \cite{Si} uncovered previously unpublished notes by Riemann, astonishingly showing that Riemann had actually developed sophisticated saddle point methods for the efficient numerical evaluation of the values of $\zeta(s)$ on the critical strip. Based on a numerical investigation of the properties of the Riemann-Siegel formula for the first zero of $\zeta(s)$, Edwards proposed an intriguing "speculation" in his seminal book \cite{E} regarding the possible theoretical origins of the Riemann Hypothesis. Recall that the Hardy $Z$-function is the real function defined by
\begin{equation} \label{eq:Hardy}
Z(t) = e^{i \theta(t)} \zeta \left ( \frac{1}{2} +it \right )
\end{equation}
where
\begin{equation} \label{eq:RS-theta}
\theta(t) = \text{arg} \left ( \Gamma \left ( \frac{1}{4} + \frac{i t}{2} \right ) \right ) -\frac{t}{2} \log(t).
 \end{equation} 
 is the Riemann-Siegel \(\theta\)-function \cite{E,I}. The Riemann Hypothesis is equivalent to the statement that all non-trivial zeros of the Hardy $Z$-function are real. We will refer to 
\begin{equation} 
Z_0(t):= cos(\theta(t)),
\end{equation}
as \emph{the core of $Z(t)$}. The zeros $t_n \in \mathbb{R}$ of the core $Z_0(t)$ are all real and are given as the unique solutions of 
\begin{equation} 
\theta(t_n) = \left ( n + \frac{1}{2} \right ) \pi, 
\end{equation} for any $n \in \mathbb{Z}$. Edwards' speculation is essentially the question of whether one can find a method to transition from zeros of $Z_0(t)$, which is the first term of the Riemann-Siegel formula, to zeros of $Z(t)$. 

 However, according to Edwards himself, while such a method might give insight to Riemanns' original motivation to conjecture the RH, when applied directly to the classical Riemann-Siegel formula, it faces significant obstacles in forming a robust plausibility argument for the Riemann Hypothesis. Rather, it further reaffirms its notorious difficulty rather than shed light on why it might hold true. 
 
 In recent works \cite{J,J4}, we introduced an alternative to the Riemann-Siegel formula via high-order sections based on series acceleration techniques. Our aim in this work is to investigate the ideas of Edwards' speculation applied to our accelerated formulas, rather than to the Riemann-Siegel formula. We show that our formula avoids many of the analytical challenges encountered in the classical Riemann-Siegel case, leading us to  introduce a new variational framework for the study of the zeros of the zeta function. 
 
 For any $N \in \mathbb{N}$, we introduce the space $\mathcal{Z}_N$, of generalized sections of Hardy's $Z$-function
\begin{equation} 
Z_N(t; \overline{a}) = cos(\theta(t))+ \sum_{k=1}^{N} \frac{a_k}{\sqrt{k+1} } cos ( \theta (t) - ln(k+1) t)
\end{equation}
where $\overline{a}=(a_1,...,a_N) \in \mathbb{R}^N$. Note that given a path $\gamma(r)$ in the space $\mathcal{Z}_N$ starting at $\gamma(0) = Z_0(t)$, one can continuously extend $t_n(r)$ to be a corresponding zero of $Z_N (t ; \gamma(r))$. Our main result in this work is the following:

\begin{mainthm}[Edwards' Speculation for High-Order Sections] \label{thm:1}
The Riemann Hypothesis holds if and only if, for any \(n \in \mathbb{Z}\), there exists a path \(\gamma(r)\) in \(\mathcal{Z}_{\left[ \frac{t_n}{2} \right]}\) from \(\overline{a}=\overline{0}\) to \(\overline{a}=\overline{1}\), along which \(t_n\) does not collide with its adjacent zeros \(t_{n\pm 1}\). That is, for which \(t_n(r) \neq t_{n \pm 1}(r)\) for all \(r \in [0,1]\).
\end{mainthm}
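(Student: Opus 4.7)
The plan is to establish both directions of the equivalence via the continuation principle for zeros of real-analytic families, coupled with the approximation of $Z(t)$ by the high-order sections of \cite{J,J4}. Fix $n \in \mathbb{Z}$, set $N = \left[\frac{t_n}{2}\right]$, and observe that $(r,t) \mapsto Z_N(t;\gamma(r))$ is real-analytic in both variables and reduces at $r=0$ to $\cos(\theta(t))$, whose zeros $\{t_m\}$ are simple, real, and strictly ordered by the monotonicity of $\theta$ on the relevant range. The central principle I would use is \emph{reality conservation}: since $Z_N(t;\gamma(r))$ is real-valued in $t$, its complex zeros occur in conjugate pairs, so a simple real zero can leave the real axis only by first coalescing with a neighboring real zero into a double zero that subsequently bifurcates into a conjugate pair. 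So long as $t_n(r) \neq t_{n \pm 1}(r)$ along $\gamma$, the implicit function theorem continues $t_n(r)$ as a simple real zero and the ordering $t_{n-1}(r) < t_n(r) < t_{n+1}(r)$ is preserved, which precludes any other real collision as well.

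For the $(\Leftarrow)$ direction, assume a non-colliding path $\gamma$ exists. By reality conservation, $t_n(1)$ is a real zero of $Z_N(t;\overline{1})$. Invoking the acceleration results of \cite{J,J4}, the choice $N = \left[\frac{t_n}{2}\right]$ is large enough that $Z_N(t;\overline{1})$ approximates $Z(t)$ closely in a neighborhood of $t_n$, so that $t_n(1)$ corresponds to a genuine real zero of $Z(t)$. Since $n$ was arbitrary, this establishes the Riemann Hypothesis.

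For the $(\Rightarrow)$ direction, assume RH and consider the collision locus $\mathcal{C}_n \subset \mathcal{Z}_N$ consisting of parameters $\overline{a}$ at which $t_n(\overline{a}) = t_{n+1}(\overline{a})$ or $t_n(\overline{a}) = t_{n-1}(\overline{a})$. This is a real-analytic subvariety of generic codimension one. Under RH, the same approximation argument shows that $\overline{1} \notin \mathcal{C}_n$, and trivially $\overline{0} \notin \mathcal{C}_n$, so both endpoints lie in $\mathcal{Z}_N \setminus \mathcal{C}_n$. It remains to connect them by a path inside this complement.

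This last step is where I expect the main difficulty. A codimension-one subvariety can in principle separate its ambient Euclidean space, so merely knowing that both endpoints avoid $\mathcal{C}_n$ is not automatically sufficient. My strategy is to exploit the growing ambient dimension $N = \left[\frac{t_n}{2}\right]$ together with the fact that $\mathcal{C}_n$ is cut out by a single scalar condition $t_n(\overline{a}) - t_{n \pm 1}(\overline{a}) = 0$: construct the path inductively by activating the coefficients $a_1, a_2, \ldots, a_N$ successively and using the remaining free directions to detour transversally around any encountered component of $\mathcal{C}_n$. The delicate point is to verify that RH ensures this construction never becomes topologically obstructed, which I would attempt by relating the path-connectedness of $\mathcal{Z}_N \setminus \mathcal{C}_n$ directly to the reality of the zeros of the limiting function $Z(t)$, thereby closing the equivalence.
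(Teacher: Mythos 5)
Your core mechanism---that $Z_N(t;\gamma(r))$ has a conjugate-symmetric zero set, so a simple real zero can leave the real axis only by first coalescing with a neighbouring real zero---is exactly the paper's Theorem \ref{thm:A}, and your overall reduction of the main theorem to this local realness principle plus the high-order section results of \cite{J,J4} follows the paper's route. Two points, however, need attention.

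First, in the ($\Leftarrow$) direction your step ``$Z_N(t;\overline{1})$ approximates $Z(t)$ closely in a neighborhood of $t_n$, so $t_n(1)$ corresponds to a genuine real zero of $Z(t)$'' is a gap as stated: mere closeness of two real functions does not transfer realness of zeros from one to the other. Indeed, the Hardy--Littlewood approximate functional equation is also an $O(t^{-1/2})$-accurate approximation, yet it misclassifies genuine real zeros of $Z(t)$ as complex ones (cf.\ Fig.\ \ref{fig:f1.1} and the discussion in Section \ref{s:s5}). What the paper actually invokes is the exact equivalence of Theorem \ref{thm:Spira-RH}: \emph{all} zeros of Spira's high-order section $Z_{N(t)}(t)=Z_N(t;\overline{1})$ are real if and only if RH holds. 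Replace the approximation heuristic by an explicit appeal to that equivalence and this direction closes as you intend.

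Second, you correctly identify that the ($\Rightarrow$) direction---connecting $\overline{0}$ to $\overline{1}$ inside the complement of the collision locus $\mathcal{C}_n$, given only that both endpoints avoid it---is the delicate step, and your inductive detour construction is not completed: you never rule out that $\mathcal{C}_n$ separates the two endpoints, and you say so yourself. You should know that the paper does not supply this argument either; it deduces the theorem ``directly'' from Theorem \ref{thm:A} and explicitly defers the definition and geometric analysis of the discriminant hypersurface $\Delta_n(\overline{a})=0$ to future work. So on necessity your proposal is no weaker than the paper's own treatment, and your explicit flagging of the connectivity issue is the more honest account; but neither text contains a complete argument for that direction, and any serious attempt will have to establish that the relevant non-colliding locus in $\mathcal{Z}_N$ containing $\overline{0}$ also contains $\overline{1}$, in analogy with the connectedness of the set of hyperbolic polynomials.
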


The proof of this result is partially based on previous results \cite{J,J4} on Spira's conjecture and high-order sections. In the subsequent sections, we further expand on the crucial differences between our Theorem \ref{thm:1} and the Riemann-Siegel formula setting. Mainly, we explain how, contrary to the classical case, Theorem \ref{thm:1} can indeed be considered as leading to the formulation of a meaningful plausibility argument for the RH.

\bigskip

The rest of this paper is organized as follows. In Section \ref{s:s2}, we recall the Riemann-Siegel formula and its role in numerical calculations of $Z(t)$. Section \ref{s:s3} introduces the formulation of Edwards' speculation, the core function $Z_0(t) = \cos(\theta(t))$ and its zeros. It outlines three foundational questions prompted by the speculation, each leading to in-depth discussions in the following Sections \ref{s:s4} to \ref{s:s6}

In Section \ref{s:s4}, we outline the experimental relation between the zeros of $Z(t)$ and those of the core $Z_0(t)$, central to Edwards' speculation. The shortcomings of using the Riemann-Siegel formula for the theoretical exploration of RH, particularly tracking the impact of terms in the Riemann-Siegel equation on the zeros locations, are discussed in Section \ref{s:s5}. Section \ref{s:s6} examines the limitations of classical numerical methods, such as the Newton method, for effectively transitioning from zeros of $Z_0(t)$ to those of $Z(t)$. Examples where the method fails to provide accurate correspondence between the zeros are presented, highlighting the necessity for more sophisticated approaches. 

In Section \ref{s:s7}, we present high-order sections as an alternative which avoids the shortcomings of the classical Riemann-Siegel formula outlined in Sections \ref{s:s4} to \ref{s:s6}. Building upon the high-order sections discussed in Section \ref{s:s7}, Section \ref{s:s8} develops the new variational method for Edwards' speculation, allowing to transition from zeros of $Z_0(t)$ to those of $Z(t)$ in a continuous manner via a high-dimensional parameter space and proving Theorem \ref{thm:1}. An example of the $730120$-th zero, for which our method is shown to successfully avoid the limitations encountered for Newton's method, is presented in detail in Section \ref{s:ex}. Finally, Section \ref{s:s9} provides a summary of our results, concluding remarks, and potential directions for future research.

\section{The Riemann-Siegel Formula - Numerics vs. Theory} 
\label{s:s2}

In the series of works \cite{HL,HL2,HL3} Hardy and Littlewood developed the formula known as the approximate functional equation. In its most widely used form the formula gives   
\begin{equation} 
\label{eq:HL} 
Z(t) =  2 \sum_{k=1}^{\widetilde{N}(t)} \frac{cos(\theta(t)-ln(k) t) }{\sqrt{k}}+R(t),
\end{equation}  
where $\widetilde{N}(t) = \left [ \sqrt{ \frac{ t}{2 \pi}} \right ]$ and the error term is given by 
\begin{equation} 
R(t)=O \left ( \frac{1}{\sqrt[4]{t}}  \right ).
\end{equation}
This representation, while powerful, for many purposes, requires further refinement of the error term $R(t)$ to enhance the level of precision, a subject not addressed in the original works of Hardy and Littlewood. 

In the 1930s, C. L. Siegel discovered the remarkable Riemann-Siegel formula by examining Riemann's unpublished manuscripts \cite{Si}. While the manuscripts have been studied by various mathematicians, it was Siegel's unique approach to scholarship that allowed him to uncover this crucial aspect of Riemann's work, which had been previously unnoticed. Siegel observed that Riemann's notes contained a variant of the Hardy-Littlewood formula \eqref{eq:HL} but, more astonishingly, also introduced sophisticated saddle-point techniques for the required efficient computation and further asymptotic expansion of the error term \(R(t)\). First, the formula expresses the superb approximation
\begin{equation}
R(t) \sim \frac{e^{-i\theta(t)}e^{-\frac{\pi t}{2}}}{(2\pi)^{\frac{1}{2}+it}e^{-\frac{i\pi}{4}}(1 - ie^{-\pi t})} \int_{L} \frac{e^{-\widetilde{N} z}(-z)^{-\frac{1}{2}+it}}{e^z - 1} dz,
\end{equation}
where the integration path $L$ encompasses the first $2\widetilde{N}(t)$ singularities of the integrand. The path begins at $-\infty$, approaches the saddle point $z = i(2\pi t)^{1/2}$ at an angle of $\pi/4$ at the path of steepest descent, where the integral is "concentrated", and then extending back towards $\infty$. Further analysis leads to the following asymptotic expansion to any $k$-th order 
\begin{equation}
\label{eq:RS-app1}
R(t) \approx (-1)^{\widetilde{N}(t)-1} \left( \frac{t}{2\pi} \right)^{-1/4}  \left( \frac{e^{i\left( \frac{t}{2} ln \left ( \frac{t}{2 \pi} \right ) - \frac{t}{2} - \frac{\pi}{8} - \theta(t) \right)}}{1 - ie^{-\pi t}} \right) \sum_{n=0}^{k} b_n c_n,
\end{equation}
where the $b_n$ coefficients are given by the following recursion relation  
\begin{equation} b_{n+1} = \sqrt{\frac{t}{ 2\pi}} \frac{\pi i (2n + 1)b_n - b_{n-2}}{4\pi^2(n + 1) }, \end{equation}
with $b_0=1$ and $b_{-1}=0$ and the $c_n$ coefficients are given by 
\begin{equation} c_n = \frac{n!}{2^n} \sum_{j=0}^{\left\lfloor n/2 \right\rfloor} \frac{(2\pi i)^j \Psi^{(n-2j)}(p)}{2j! (n - 2j)!}, \end{equation}
where $p= \sqrt{\frac{t}{2 \pi}} - \widetilde{N}(t)$ and $\Psi(p):= \frac{cos ( 2 \pi (p^2-p -\frac{1}{16})) }{cos (2 \pi p ) }$. For instance, expanded to first-order, the Riemann-Siegel formula gives 
\begin{equation} 
R(t) = (-1)^{N(t)-1} \left ( \frac{t}{2 \pi} \right )^{-\frac{1}{4}} \Psi(p)+ O \left ( \frac{1}{t^{\frac{3}{4}}} \right ),
\end{equation}
see \cite{E,Pugh1992}. These innovative techniques allowed Riemann to calculate the first few non-trivial zeros of the zeta function, validating that they lie on the critical line. In fact, since its introduction the Riemann-Siegel formula, especially when enhanced with the Odlyzko-Schönhage algorithm,  has been the main method for numerical verification of the RH, see \cite{Gourdon2004,Le,Odlyzko1992,Od,OdlyzkoSchoenhage1988,Ro,Titchmarsh1935,Turing1953}. The most comprehensive verification to date, achieved by Platt and Trudigian, confirms the RH up to $3\cdot 10^{12}$, see \cite{PT}. However, up to the present day, the Riemann-Siegel formula has primarily been regarded as a numerical tool rather than offering a theoretical explanation for the origins of the RH, for reasons which we will expand on in the following sections. 

In particular, even after the introduction of the Riemann-Siegel formula, many mathematicians maintained that it does not pave a reasonable direction towards proving the RH, nor does it provide a plausible argument for its validity, which is currently entirely considered missing. 
For instance, Edwards, after describing the Riemann-Siegel formula and its proof in detail in his book \cite{E} follows up with a subsection entitled \emph{"Speculations on the Genesis of the Riemann Hypothesis"} (Subsection 7.8), in which he writes:
\begin{quote}
\emph{"Even today, more than a hundred years later, one cannot really give any solid reasons for saying that the truth of the RH is 'probable' etc. Also the verification of the hypothesis for the first three and a half million roots above the real axis perhaps makes it more 'probable'. However, any real \textbf{reason}, any plausibility argument or heuristic basis for the statement, seems entirely lacking."} (H. M. Edwards)
\end{quote}

Similarly, Atle Selberg, which was of course fluent in the theory of computations of zeta, succinctly remarked on the absence of promising approaches for tackling the RH, including attempts to theoretically formalize the content of the Riemann-Siegel formula:
\begin{quote}
\emph{"There have probably been very few attempts at proving the Riemann hypothesis, because, simply, no one has ever had any really good idea for how to go about it!"} (Atle Selberg)
\end{quote}

Nevertheless, in the continuation of the same Section 7.8, as the name suggests, Edwards presents a unique "speculation" on the origin of the Riemann hypothesis. In the next section we describe and analyse Edwards' speculation and discuss its substantial shortcomings when applied to the classical Riemann-Siegel formula.  

 \section{The Original Formulation of Edward's Speculation on the Riemann-Siegel Formula and Related Questions}
\label{s:s3}

In Section 7.6 of his books, Edwards conducts a few sample computations via the Riemann-Siegel formula, including the computation of the first zero of $Z(t)$ as originally conducted by Riemann himself. Edward's speculation, which follows, is essentially the suggestion that the Riemann Hypothesis might have actually originated from hands-on computational observations regarding the inner-workings of the Riemann-Siegel formula. It is encapsulated in the following paragraph:

\begin{center}
\emph{
'My guess is simply that Riemann used the method followed in Section 7.6 to locate roots and that he observed that normally — as long as \(t\) is not so large that the Riemann-Siegel formula contains too many terms and as long as the terms do not exhibit too much reinforcement — this method allows one to go from a zero of the first term \(2\cos(\theta(t))\) to a nearby zero of \(Z(t)\)'} (H. M. Edwards, 1974) 
\end{center}

The computation of the first zero of $Z(t)$  indeed shows that it is in proximity to the first zero of $cos(\theta(t))$. However, one may find that, as presented, the statement remains somewhat vague, almost cryptic, and leaves the reader with various questions. For instance, we have the following three:

\begin{description}

    \item[\textit{Question 1 - The good}] Why should one expect the existence of a general, fundamental relationship between the zeros of $2\cos(\theta(t))$ and those of $Z(t)$, based merely on one numerical computation of the first zero?

    \item [\textit{Question 2 - The bad}] How exactly does one transition from zeros of $2\cos(\theta(t))$ to nearby zeros of $Z(t)$, and what mathematically well-defined method should underpin such a procedure?

    \item [\textit{Question 3 - the ugly}] What is the precise mathematical meaning of "reinforcement" and of having "too many terms" in the Riemann-Siegel formula? 
\end{description}

In the next sections we aim to analyse these questions and show that the vagueness with respect to them is not accidental, each for its own reasons. In particular, we will describe why each of the questions actually has to do with fundamental properties of the $Z$-function and the Riemann-Siegel function. Let us define:

\begin{dfn}[The core of the $Z$-function] We refer to $Z_0(t)=cos(\theta(t))$ as \emph{the core function of $Z(t)$}. 
\end{dfn}

The zeros of the core $Z_0(t)$ are completely well-understood and their description follows immediately from the definition: 

\begin{prop}[Zeros of the core $Z_0(t)$] \label{prop:1} The zeros $t_n \in \mathbb{R}$ of the core $Z_0(t)$ are all real and are given as
the unique solutions of 
\begin{equation} 
\theta(t_n) = \left ( n + \frac{1}{2} \right ) \pi,
\end{equation} 
for any $n \in \mathbb{Z}$.
\end{prop}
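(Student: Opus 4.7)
The proposition is essentially a direct consequence of the definition of $Z_0$ combined with elementary analytic properties of the Riemann-Siegel theta function, and the plan is to verify the two claims --- realness of the zeros and their characterization by the Gram-type equation $\theta(t_n)=(n+\tfrac{1}{2})\pi$ --- in sequence. Since $\cos$ vanishes precisely on $\{(n+\tfrac{1}{2})\pi : n \in \mathbb{Z}\}$, the identity $Z_0(t) = \cos(\theta(t)) = 0$ is equivalent to $\theta(t) = (n+\tfrac{1}{2})\pi$ for some $n \in \mathbb{Z}$. Because $\theta$ is real-valued for $t>0$ (the argument of $\Gamma(\tfrac{1}{4}+\tfrac{it}{2})$ is real by virtue of the reflection property $\Gamma(\overline{z}) = \overline{\Gamma(z)}$, and the subtracted term $\tfrac{t}{2}\log t$ is manifestly real), every solution $t$ automatically lies in $\mathbb{R}$, which disposes of the first assertion.

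For existence and uniqueness of the index-$n$ solution $t_n$, the plan is to invoke strict monotonicity of $\theta$ on the relevant range. A short calculation based on the Stirling asymptotic for the digamma function yields
\begin{equation}
\theta'(t) = \tfrac{1}{2}\log\!\left(\tfrac{t}{2\pi}\right) + O(t^{-2}),
\end{equation}
which is strictly positive for all $t$ larger than a small explicit threshold. Combined with $\theta(t) \to +\infty$ as $t \to +\infty$, the intermediate value theorem then supplies, for each sufficiently large $n$, a unique $t_n$ solving $\theta(t_n) = (n+\tfrac{1}{2})\pi$, and no other real $t$ can satisfy this equation since a second solution would contradict monotonicity of $\theta$ beyond the threshold.

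The one mildly delicate point, and the anticipated main obstacle, is the small-$t$ regime: $\theta$ attains a minimum near $t \approx 6.29$ and is therefore not globally monotonic on $(0,\infty)$, so the bookkeeping of the index $n$ for small values needs a fixed convention, in complete parallel with the situation for the Gram points where one stipulates $n \geq -1$. Once this indexing convention is set up (as is implicit in the statement and consistent with the subsequent use of the $t_n$ throughout the paper), the content of the proposition reduces entirely to the monotonicity-plus-unboundedness argument above, and nothing further is required.
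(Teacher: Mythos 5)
The paper offers no argument at all for this proposition --- it is introduced with ``their description follows immediately from the definition'' --- so your write-up is already more careful than the source. Your reduction of $Z_0(t)=0$ to $\theta(t)=(n+\tfrac12)\pi$, the monotonicity argument via $\theta'(t)=\tfrac12\log\bigl(\tfrac{t}{2\pi}\bigr)+O(t^{-2})$, and your flag that $\theta$ is not monotone below its minimum near $t\approx 6.29$ are all correct; in fact the non-monotonicity means the literal ``uniqueness'' claim fails for a few small indices (e.g.\ $\theta(t)=-\pi/2$ has two positive solutions, one on each branch), so your insistence on an indexing convention is a genuine improvement on the paper's phrasing rather than a quibble.

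The one real gap is the first assertion. You dispose of ``all zeros are real'' by noting that $\theta$ is real-valued on the reals, but that only shows that the \emph{real} zeros are characterized by the displayed equation; it says nothing about whether $\cos(\theta(t))$, viewed as an analytic function of complex $t$ (which is the reading the paper needs, since $Z_0$ is meant to be the baseline for which the analogue of RH holds trivially), has non-real zeros. Ruling those out is equivalent to showing that $1+\chi(s)$ vanishes only on the critical line --- precisely Spira's theorem, which the paper itself cites in the remark immediately following the proposition --- and requires controlling $|\chi(s)|$ (equivalently $\operatorname{Im}\theta$) off the real axis, not merely observing $\theta(\mathbb{R})\subset\mathbb{R}$. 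As written, that step of your argument is circular: it assumes the solutions are real in order to conclude they are real. Either restrict the claim to a characterization of the real zeros (in which case your proof is complete modulo the indexing convention), or supply the off-line estimate on $\chi$.
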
 

We start with the good part, the expectation for the existence of a general relation between the zeros of the core $Z_0(t)$ and those of $Z(t)$ itself.   

 \section{The Relation of the Zeros of $Z_0(t)$ and the Zeros of $Z(t)$}
\label{s:s4}

Since the idea to transition from a zero of $Z_0(t)$ to a zero of $Z(t)$ is based on the computation of the first zero of $Z(t)$ via the Riemann-Siegel formula, it would be properly described as an experimental observation. As an experimental observation it would greatly benefit from further examples, beyond the one presented by Edwards, for a general pattern to be observed. The following Fig. \ref{fig:f3.1} shows the graphs\footnote{$\ln |F(t)|$ is taken to accentuate the zeros occurring where the function converges to $-\infty$.} of $ln \abs{Z(t)}$ (blue) compared to $ln \abs{Z_0(t)}$ (orange) for $0 \leq t \leq 50$:

\begin{figure}[ht!]
	\centering
		\includegraphics[scale=0.2]{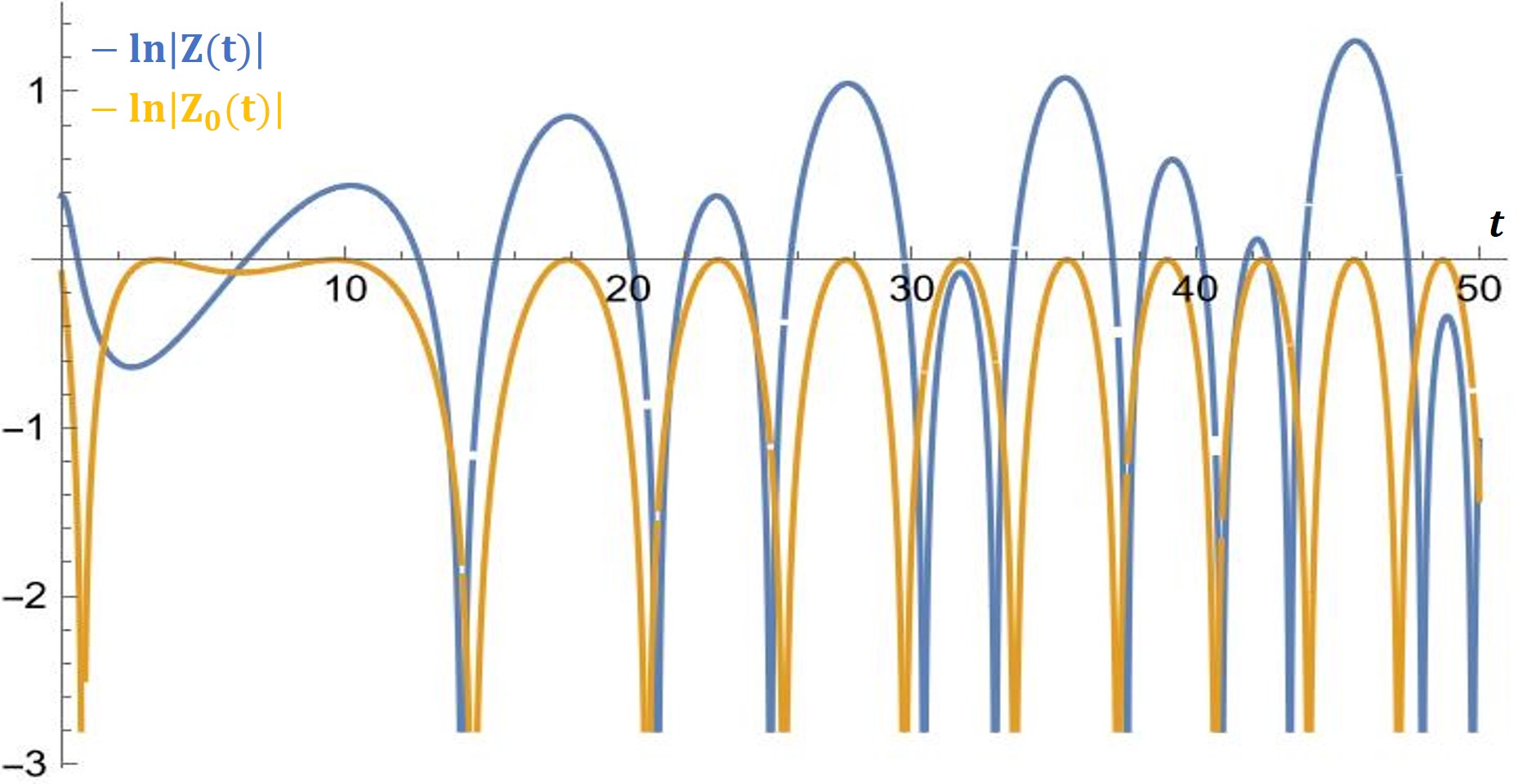} 	
		\caption{\small{Graph of $ln \abs{Z(t)}$ (blue) and $ln \abs{Z_0(t)}$ (orange) for $0 \leq t \leq 50$.}}
\label{fig:f3.1}
	\end{figure}

Figure \ref{fig:f3.1} indeed reveals the remarkable phenomenon Edwards is hinting at, showcasing what might be referred to as one of the most fundamental "facts of nature" regarding the zeros of $Z(t)$. The observed phenomenon is that the zeros of the core $Z_0(t)$, which is the leading term of \eqref{eq:HL}, seem to provide an initial, rough, approximation of the zeros of $Z(t)$.

However, although further vast numerical verification shows that this phenomenon extends well beyond the range presented in Fig. \ref{fig:f3.1}, it remains, fundamentally, strictly an \emph{experimental} phenomenon rooted in empirical evidence rather than a formal mathematical statement. In fact, the crux of Edwards' speculation can be described as the question of whether the observed empirical relationship between the zeros of $Z_0(t)$ and $Z(t)$ can be formalized in a mathematically valid manner, a question of monumental importance for the study of the Riemann Hypothesis.

\begin{rem}[Franca-LeClair] It is reasonable to suggest that the relationship between the zeros of $Z_0(t)$ and $Z(t)$, as an empirical phenomenon, was discovered or even folklorically known, at times, to various mathematicians along the years, and, adopting Edwards' perspective, it was already probably observed by Riemann himself. For instance, Spira \cite{SP1} studied a variant of the core, given by $\zeta_0(s):=1+ \chi(s)$, which in his case arises as the first section of the Hardy-Littlewood formula for $\zeta(s)$, and proved that all of its zeros lie on the critical line. Essentially, the zeros of $\zeta_0(s)$ are given by $s_n = \frac{1}{2}+it_n$ where $t_n$ are the zeros of $Z_0(t)$ described in Proposition \ref{prop:1}.

More recently, the function $1+ \chi(s)$ arose in the investigations of Franca-LeClair \cite{FL,FL2}. Interestingly, in their case, they came to study $1+ \chi(s)$ in a way that is completely independent of the Riemann-Siegel formula, but through their theory of transcendental equations for zeros of zeta. Franca and LeClair also conducted vast further numerical investigations on the relation between zeros of $\zeta_0(s)$ and $\zeta(s)$, uncovering various intriguing additional relations. They also observe that the values of $t_n$ can be expressed in closed form as follows:
\begin{equation} 
\label{eq:FLsol2}
t_n = \frac{(8n-11) \pi}{4 W_0\left( \frac{8n-11}{8e} \right)},
\end{equation}
where $W_0(x)$ represents the principal branch of the Lambert W function, for any $n \in \mathbb{Z}$. It is noteworthy that, although it follows directly from the definition of $W_0(x)$, such a formulation could not have been presented in earlier works. This is because the Lambert $W$-function, despite being historically studied, was only recently formalized as a special function \cite{CGHJK}. 

Interestingly, Franca and LeClair come to a variant of Edwards' question, seeking a method to relate $t_n$ to the zeros of zeta. However, as we will see, their approach encounters challenges similar to those in Edwards' method using the Riemann-Siegel formula, particularly in the ability to describe such a method in practice, beyond an empirical observation. 
\end{rem} 

In order to proceed, the question is, as Edwards points out - how do we transition from $Z_0(t)$ to $Z(t)$ in a mathematically valid manner? Naturally, such a transition requires a formula for computing $Z(t)$, with the Riemann-Siegel formula being the immediate candidate. This introduces the second central issue in Edwards' speculation, which critically prevents it from leading to a plausible argument for the RH.

\section{The Infeasibility of Tracking the Effect of the Terms of the Riemann-Siegel Equation} 
\label{s:s5}
If one wants to proceed to study how zeros of $Z_0(t)$ transition to zeros of $Z(t)$ via the Riemann-Siegel formula, the method is sort of self-evident in theory. We "just" need to track down the contribution of the two components comprising the formula:
 
\begin{description}
\item[\textit{1) The Hardy-Littlewood AFE}] Since $Z_0(t) = \cos(\theta(t))$ is the first term of the Hardy-Littlewood AFE \eqref{eq:HL}, we need to track how the addition of the rest of the terms of the approximate functional equation impacts the location of the zeros. These additional terms are of the form $\frac{\cos(\theta(t) - \ln(k)t)}{\sqrt{k}}$ for $k = 2, \ldots, \widetilde{N}(t)$, where the number of terms, $\widetilde{N}(t) - 1$, increases to infinity as $t$ grows.

\item[\textit{2) The Riemann-Siegel error}]  Assuming the impact of the terms from \eqref{eq:HL} on a given zero \(t_n\) of the core \(Z_0(t)\) can be tracked, we would then need to turn to an analysis of the effect of the Riemann-Siegel error \eqref{eq:RS-app1}. From a numerical point-of-view, this stage is actually less costly than the former, and the coefficients \(b_n\) and \(c_n\) can be efficiently computed using standard symbolic computation software. 

However, from an analytic point-of-view, the "ugly truth" is that a tremendous challenge arises from the fact that \(b_n\) and \(c_n\) lack closed-form expressions. Specifically, \(b_n\) is defined via a recursive formula, and \(c_n\) is reliant on high-order derivatives of \(\Psi(p)\), rendering the task theoretically completely infeasible.
\end{description} 

Hence, from a modern perspective, the main challenge in studying the Riemann Hypothesis arguably stems from the fact that, while the Riemann-Siegel formula enables efficient numerical computations of $Z(t)$ and its zeros, it actually provides minimal analytical insight into their structure. This issue is primarily, but not solely, attributed to the absence of closed-form expressions for the coefficients $c_n$ and $b_n$ involved in estimating the error $R(t)$.

To further illustrate the magnitude of the challenge, it's important to note that for many zeros \(t_n\), the main term of the AFE \eqref{eq:HL} might be sufficiently accurate for observing the RH. However, for certain zeros \(t_n\), the AFE alone falls short of providing adequate verification for the hypothesis. For such zeros the evaluation of the error term \eqref{eq:RS-app1} becomes crucial. The following Fig. \ref{fig:f1.1} displays \( \ln \left| Z(t) \right| \) (orange) alongside the AFE approximation (blue) within the range \(412 < t < 419\):
\begin{figure}[ht!]
\centering
\includegraphics[scale=0.375]{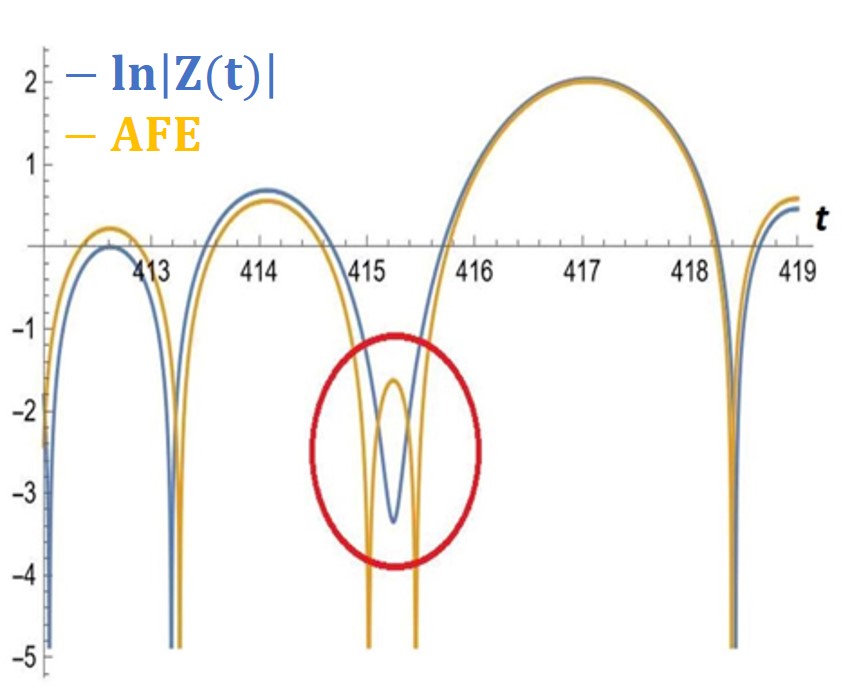}
\caption{\small Graphs of $ln \abs{Z(t)}$ (orange) and the AFE approximation (blue) in the range $412<t<419$}
\label{fig:f1.1}
\end{figure}

 The figure illustrates an instance of two real zeros of $Z(t)$ missed by the Hardy-Littlewood formula (highlighted in red). Actually the AFE formula erroneously identifies these two zeros as complex zeros, and hence is not sensitive enough in order to verify the RH by itself. In fact, It was already observed by Siegel himself \cite{Si} that for any order $k \in \mathbb{N}$ there will be an infinite number of zeros of $Z(t)$ which will require the evaluation of the error term $R(t)$ to an order of at least $k$. In particular, even the question of determining to what order of precision $k=k(n)$ the Riemann-Siegel error term \eqref{eq:RS-app1} must be evaluated to confirm the validity of the RH for the $n$-th zero of $Z(t)$ is completely open. 
 
Hence, while the Riemann-Siegel formula is the main tool for evaluating $Z(t)$ numerically, the characteristics outlined in this section describe what seems to be unbreachable obstacles to any theoretical advancement on the RH through the formula. Specifically, a more manageable alternative to the Riemann-Siegel formula is crucial in order for new analytical progress to be made on the study of zeros of $Z(t)$.

\section{The Failure of the Classical Newton Method to Transition from Zeros of $Z_0(t)$ to Zeros of $Z(t)$} 
\label{s:s6}

Stating the obvious, the RH concerns the zeros of the equation $Z(t) = 0$. In practice, finding closed-form solutions for equations of the form $F(t) = 0$ is rare, and given the transcendental nature of $Z(t)$, there is no expectation of a closed-form expression for its zeros similar to the one given in \eqref{eq:FLsol2} for $Z_0(t)$. 

For example, in the case of algebraic functions where $F(t)$ is a general polynomial, the Abel-Ruffini theorem states that roots can be explicitly solved in closed form only if the degree of $F(t)$ is less than five. However, for higher degrees, the zeros can only be located via numerical methods, such as the Newton method. As a starting point, the method requires a rough initial guess $t=t^1$ for a zero of a function $F(t)$, then applies the iterative process
\begin{equation}
t^{k+1} = t^{k} - \frac{F(t^{k})}{F'(t^{k})},
\end{equation}
aiming to locate a true zero, given by $\widetilde{t} = \lim_{k \rightarrow \infty} (t^k)$, see for instance \cite{DB,SM} for standard references. 

Note that this is similar to the situation we have in the context of Edwards' speculation, where the zeros $t_n$ of the core $Z_0(t)$ are expected to be rough initial guesses of the zeros of $Z(t)$, as discussed in Section \ref{s:s4}. Thus, it is more than natural to suggest the Newton method as a natural candidate for the required method vaguely anticipated by Edwards that would 
\begin{center}
\emph{'...allow one to go from a zero of the first term \( Z_0(t) \) to a nearby zero of \(Z(t)\)'.}
\end{center}
However, if the Newton method or any other, similarly classical, numerical method is indeed the method Edwards has in mind in his speculation, one comes to wonder why does he not specify it directly but rather remain so vague regarding its concrete definition?

In fact, not only does Edwards remain vague regarding the precise definition of the method, after stating the speculation he actually goes to express strong reservations regarding the possible universal applicability of any such a method altogether. In particular, he draws parallels to Gram's law, noting scenarios where it could completely fail. For instance, putting aside the challenges we described in Section \ref{s:s5}, the following cautionary note might have deterred further investigation into the relation between \(Z_0(t)\) and \(Z(t)\):

\begin{center}
\emph{
'The method can certainly fail. To see how completely it can fail, it suffices to consider a complete failure of Gram's law, for example, the failure between \(g_{6708}\) and \(g_{6709}\) in Lehmer's graph.'} (ibid.) 
\end{center}

Nevertheless, it is illuminating to investigate the Newton method as an initial attempt to find the required method for transitioning from zeros of $Z_0(t)$ to zeros of $Z(t)$. Of course, it should be noted that if the Newton method, starting at $t_n \in \mathbb{R}$, would indeed converge to a unique real zero $\widetilde{t}_n \in \mathbb{R} $ of $Z(t)$ for any $n \in \mathbb{Z}$, it would imply the RH.

Let us also note that since our investigation in this section will be based strictly on numerical examples, for reasons that will become clear shortly, the theoretical reservations of Section \ref{s:s5} do not apply. Thus, we can employ the classical Riemann-Siegel formula for this purpose. Consider the following example of the zeros mentioned in Edwards' concern:

\begin{exmp}[The first Lehmer pair of zeros] 
 Let us apply Newton's method for the two Lehmer zeros referred to by Edwards. Starting from $t_{6708}$ and $t_{6709}$, The two corresponding zeros of $Z(t)$ are known to be given approximately by
\begin{equation}
\label{eq:Lehmer}
\begin{array}{ccc}
\widetilde{t}_{6708}\approx 7005.06 & ; & \widetilde{t}_{6709}\approx 7005.10,
\end{array}
\end{equation}
they are characterized as a pair of zeros of $Z(t)$ of unusual close proximity to each other. The first Newton iterations $t^k_{6708}$ and $t^k_{6709}$ for these two zeros are presented in Table \ref{table:1}:

\begin{table}[htbp]
\centering
\begin{tabular}{||c c c ||} 
 \hline
 k & $t^k_{6708}$ & $t^k_{6709}$  \\ [0.5ex] 
 \hline\hline
 1 & 7004.95 & 7005.84  \\ 
 \hline
 2 & 7005.01 & 7005.23 \\
 \hline
 3 & 7005.04 & 7005.15  \\
 \hline
 4 & 7005.05 & 7005.12 \\
 \hline
 5 & 7005.06 & 7005.10 \\
  [1ex] 
 \hline
\end{tabular}
\caption{\small{Newton's Iterations for $t^{k}_{6708}$ and $t^{k}_{6709}$}}
\label{table:1}
\end{table}
The table demonstrates that, despite Edwards' reservations about the possible effect of the unusual proximity of the Lehmer zeros $t_{6708}$ and $t_{6709}$, the iterations $t^k_{6708}$ and $t^k_{6709}$ nevertheless do converge to $\widetilde{t}_{6708}$ and $\widetilde{t}_{6709}$ as given in \eqref{eq:Lehmer}, respectively, without encountering any notable issues in practice.

\end{exmp}

In fact,  experimentation shows that Newton's method tends to work surprisingly well for a vast number of zeros in a similar manner to that presented in the above example. However, despite the apparent success of Newton's method in relating a substantial number of zeros $t_n$ of $Z_0(t)$ to corresponding zeros $\widetilde{t}_n$ of $Z(t)$, our following example reveals that Edwards' original concerned intuition is far from being unjustified, and Newton's method does eventually fail, albeit for far more distant zeros than the ones he originally proposed\footnote{Zeros such as $\widetilde{t}_{730120}$ and $\widetilde{t}_{730121}$ were most certainly beyond the range of numerical verification available to Edwards at the time. In fact, in \cite{E}, he reproduced the graph of $Z(t)$ near the Lehmer zeros originally shown in \cite{Le}. Lehmer notes that this computation required "only" a few hours of machine time on a SWAC computer at the United States National Bureau of Standards, used for this purpose.}:

\begin{exmp}[Failure of Newton's method for the $730120$-th zero] \label{ex:Newton} 
 Consider the $730120$-th zero of $Z(t)$, given by standard zero tables as follows:
$$
 \widetilde{t}_{730121} \approx 450613.8005.
$$ 
Direct computation shows that if we initiate Newton's method from the $730120$-th and $730121$-th zeros of $Z_0(t)$, given by:
$$
\begin{array}{ccc} t_{730120} \approx 450613.9649 & ; & t_{730121} \approx 450614.5269, \end{array}
$$ 
both converge to the $730121$-th zero of $Z(t)$: 
	\begin{equation}
	lim_{k \rightarrow \infty} (t_{730120}^k)=lim_{k \rightarrow \infty} (t_{730121}^k)=\widetilde{t}_{730121}.
	 \end{equation} 
This is visually illustrated in Fig. \ref{fig:Newton} which shows $\ln \abs{Z(t)}$ (blue), $\ln \abs{Z_0(t)}$ (orange) and the locations of the zeros $t_{n-2}$, $t_{n-1}$ and $t_n$ of $Z_0(t)$ for $n=730121$ (purple) in the region $450613.4 \leq t \leq 450614.6$:

	\begin{figure}[ht!]
	\centering
		\includegraphics[scale=0.35]{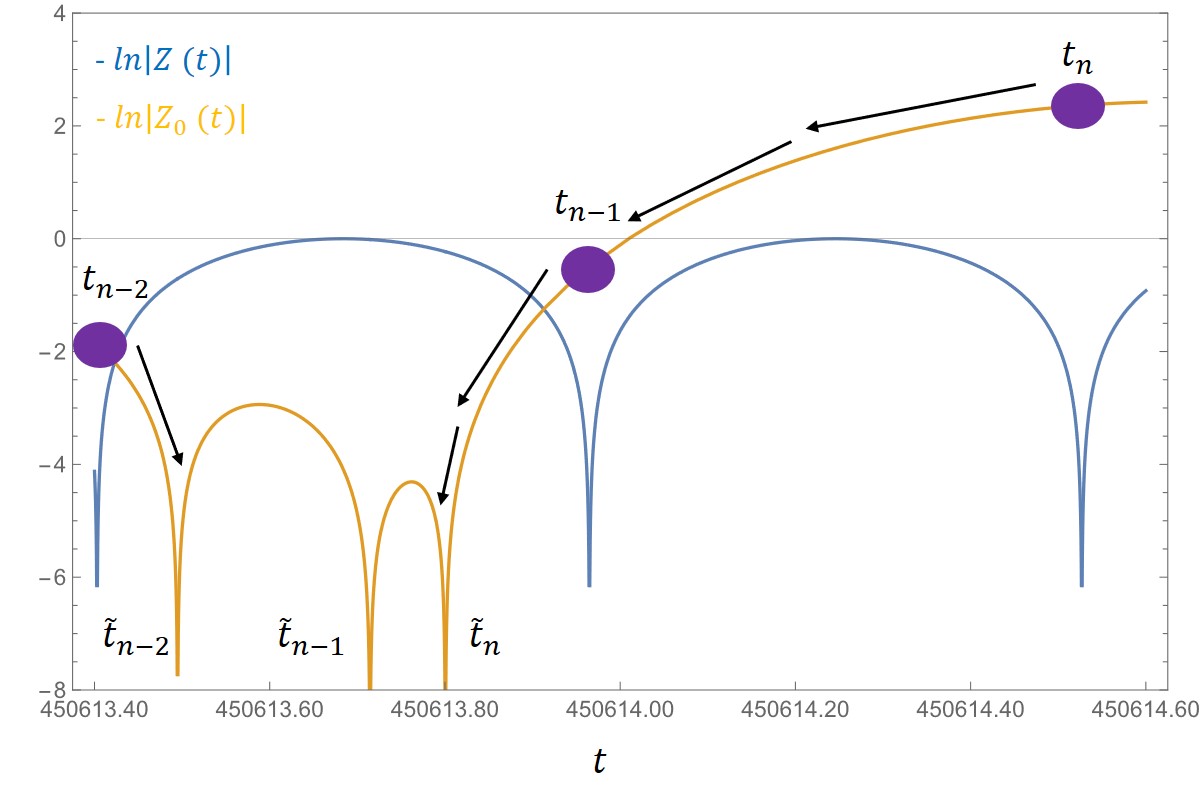} 	
		\caption{\small{Graphs of $\ln \abs{Z(t)}$ (blue), $\ln \abs{Z_0(t)}$ (orange) and the zeros $t_{n-2}$, $t_{n-1}$ and $t_n$ of $Z_0(t)$ for $n=730121$ (purple) in the region $450613.4 \leq t \leq 450614.6$.}}
\label{fig:Newton}
	\end{figure} 	
	
In the setting of Fig. \ref{fig:Newton}, the zeros $t_n$ of $Z_0(t)$ (purple) can be envisioned as balls positioned on the graph of $\ln |Z(t)|$ directly above their corresponding locations on the $t$-axis. Newton's method seeks to identify the zero by effectively rolling these balls down the curve of $\ln |Z(t)|$ until a zero is located (illustrated by black arrows). As demonstrated in Fig. \ref{fig:Newton}, this process results in both $t_{730120}$ and $t_{730121}$ converging to $\widetilde{t}_{730121}$, while $t_{730119}$ converges to $\widetilde{t}_{730119}$, thereby failing to identify $\widetilde{t}_{730120}$ in the middle.

\end{exmp}

In particular, the above example shows that direct, naive, application of Newton's method fails to locate certain zeros of $Z(t)$, and thus  cannot confirm their realness.  It serves as a proof by counterexample of:

\begin{cor}[Failure of Newton's method] Newton's method, starting from $t_n$, the $n$-th zero of the core $Z_0(t)$, does not always converge to $\widetilde{t}_n$, the $n$-th real zero of $Z(t)$.
\end{cor}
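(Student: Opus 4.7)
The plan is to give a proof by counterexample, leveraging directly the computation already presented in Example \ref{ex:Newton}. The statement asserts failure of a universal property (Newton's method always converges to the correct zero), so it suffices to exhibit a single index $n$ for which the iteration starting at $t_n$ does not yield $\widetilde{t}_n$. The index $n = 730120$ provided in Example \ref{ex:Newton} is exactly such a witness, so the body of the proof essentially consists of citing that example and spelling out why it implies the corollary.

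First, I would recall the data of Example \ref{ex:Newton}: the values $t_{730120} = 450613.9649$ and $t_{730121} = 450614.5269$ as zeros of the core $Z_0(t)$, and the value $\widetilde{t}_{730121} = 450613.8005$ as the $730121$-th real zero of $Z(t)$ taken from standard tables. The example establishes the two numerical identities
\[
\lim_{k\to\infty} t^k_{730120} \;=\; \widetilde{t}_{730121}, \qquad \lim_{k\to\infty} t^k_{730121} \;=\; \widetilde{t}_{730121},
\]
so the Newton iterate initialized at $t_{730120}$ converges to $\widetilde{t}_{730121}$ rather than to $\widetilde{t}_{730120}$.

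Next, I would invoke the known real-zero count: since the RH has been numerically verified far beyond $t \approx 450614$ (as discussed in Section \ref{s:s2}, and specifically up to $3\cdot 10^{12}$ by Platt and Trudgian), all of $\widetilde{t}_{730119}, \widetilde{t}_{730120}, \widetilde{t}_{730121}$ are known to be real and distinct. In particular $\widetilde{t}_{730120} \neq \widetilde{t}_{730121}$, so the limit exhibited in Example \ref{ex:Newton} cannot equal $\widetilde{t}_{730120}$. This is precisely the negation of the convergence claim for $n = 730120$, establishing the corollary.

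There is no serious obstacle in the argument itself — the corollary is a direct logical consequence of the numerical phenomenon documented in Example \ref{ex:Newton} together with the uniqueness of the indexing $\widetilde{t}_n$. The only point requiring care is to emphasize that the computation of the Newton iterates and the identification of $\widetilde{t}_{730121}$ rely on the classical Riemann-Siegel formula (whose use is legitimate here, as noted in the text preceding the example, since we are working numerically rather than analytically), and that the resulting failure mode is visually corroborated by Fig. \ref{fig:Newton}, where the $Z_0$-zero at $t_{730120}$ lies on a slope of $\ln|Z(t)|$ whose gradient flow descends past $\widetilde{t}_{730120}$ into the basin of $\widetilde{t}_{730121}$.
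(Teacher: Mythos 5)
Your proposal is correct and takes essentially the same route as the paper: the corollary is stated there as a direct "proof by counterexample" via Example \ref{ex:Newton}, with $n=730120$ as the witness where both Newton iterations collapse onto $\widetilde{t}_{730121}$. Your added remarks on the distinctness of $\widetilde{t}_{730120}$ and $\widetilde{t}_{730121}$ and on the legitimacy of the numerical use of the Riemann--Siegel formula only make explicit what the paper leaves implicit.
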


So far, our analysis of Edwards' speculation through classical methods identifies two main crucial obstacles: 
\begin{description} 
\item[\textit{The bad}] The direct application of classical methods like Newton's method is generally ineffective and fails for the purpose of locating all zeros, due to their insufficient sensitivity. 
\item[\textit{The ugly}] While the Riemann-Siegel formula is efficient for numerical calculations, for developing theoretical analytical arguments that would be valid for all $n \in \mathbb{Z}$, this formula is completely unfeasible due to the non-closed form of the coefficients $b_n$ and $c_n$ involved in estimating the error term $R(t)$.
\end{description}

These two obstacles demonstrate that, from a classical perspective, the questions raised in Section \ref{s:s3} due to Edwards' supposed vagueness indeed originate from deep, non-trivial issues concerning the zeros of the \(Z\)-function.
 
Hence, to find a well-defined method for relating the zeros of the core \(Z_0(t)\) to those of \(Z(t)\), it becomes mandatory to propose an alternative to the Riemann-Siegel formula. Additionally, this alternative should facilitate a more nuanced procedure than the classical Newton's method for transitioning between zeros.

\section{Spira's high-order sections vs. the Riemann Siegel Formula}
\label{s:s7}
Recall that sections of the Hardy $Z$-function are given by
\begin{equation}
\label{eq:Section} 
Z_N(t) := Z_0(t) +\sum_{k=1}^{N} \frac{\cos(\theta(t)-\ln(k+1) t) }{\sqrt{k+1}}
\end{equation}
for any $N \in \mathbb{N}$, see \cite{J4,SP2,SP1}. These sections are known to approximate the Hardy $Z$-function in two distinct ways. The first, the classical Hardy-Littlewood approximate functional equation \eqref{eq:HL}, is expressed as
\begin{equation}
Z(t)=2Z_{\widetilde{N}(t)}(t) + O \left ( \frac{1}{\sqrt[4]{t}} \right ),
\end{equation}
where $\widetilde{N}(t) = \left [ \sqrt{\frac{t}{2 \pi}} \right ]$. The second, a less conventional approximation to which we refer as Spira's approximation, is
\begin{equation}
\label{eq:Spira}
Z(t)=Z_{N(t)}(t) + O \left ( \frac{1}{\sqrt[4]{t}} \right ),
\end{equation}
with considerably more terms $N(t) = \left [\frac{t}{2} \right ]$. We have explored the remarkable properties of this Spira approximation \eqref{eq:Spira} in \cite{J4}, demonstrating that unlike the classical Hardy-Littlewood AFE \eqref{eq:HL}, which necessitates further correction by the Riemann-Siegel formula, as discussed in Section \ref{s:s2}, the Spira approximation is sufficiently sensitive to discern the RH without additional correction. In particular, based on our previous work on approximate functional equations for series acceleration \cite{J}, we have proven:

\begin{thm}[\cite{J4}] \label{thm:Spira-RH} All the non-trivial zeros of the Hardy $Z$-function $Z(t)$ are real if and only if all the zeros of the high-order Spira approximation $Z_{N(t)}(t)$ are real. 
\end{thm}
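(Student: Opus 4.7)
The plan is to prove the equivalence via a zero-counting argument, using the sharpness of the Spira approximation $Z(t) = Z_{N(t)}(t) + O(t^{-1/2})$ with $N(t) = [t/2]$ to establish a Rouch\'e-type bijection between the zeros of $Z$ and those of $Z_{N(t)}(t)$ in small neighborhoods of each ordinate $t_n$ on the real axis. Because both sides of the equivalence concern reality of zeros, the tactic is to show that a corresponding pair of zeros, one for $Z$ and one for $Z_{N(t)}$, must be simultaneously real or simultaneously off-line.

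First I would upgrade the approximation from the real line to a bounded horizontal strip $\lvert \Im(t) \rvert \le c$, using the accelerated-series machinery of \cite{J} both to justify the analytic continuation of the error and to retain the $O(t^{-1/2})$ control off the real axis. Within this strip both $Z(t)$ and $Z_{N(t_0)}(t)$, for a fixed reference value $N=N(t_0)$, are analytic and real on $\mathbb{R}$, so their non-real zeros occur in complex-conjugate pairs.

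Second I would apply Rouch\'e's theorem to a sequence of small rectangles $R_n$ enclosing each expected ordinate $t_n$. On $\partial R_n$ the goal is to verify
\[
\lvert Z(t) - Z_{N(t_0)}(t) \rvert < \lvert Z_{N(t_0)}(t) \rvert,
\]
which forces $Z$ and $Z_{N(t_0)}$ to have the same number of zeros inside $R_n$. Since both functions admit essentially one zero per unit of $\theta$-progression, conjugate symmetry inside $R_n$ leaves only two possibilities: either both relevant zeros lie on $\mathbb{R}$, or both form a conjugate off-axis pair. Running this dichotomy over all $n$ simultaneously yields both directions of the equivalence.

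The main obstacle is establishing the Rouch\'e comparison uniformly on $\partial R_n$. Since both $Z_{N(t_0)}$ and $Z$ are small near their own zeros, the contours must be positioned carefully, and one needs a lower bound of the form $\lvert Z_{N(t_0)}(t) \rvert \gg t^{-1/2+\varepsilon}$ on $\partial R_n$ in order to dominate the $O(t^{-1/2})$ error term. This is precisely the sort of sharper quantitative control that the high-order accelerated sections of \cite{J,J4} are designed to provide, and which is unavailable from the classical Hardy-Littlewood AFE. Handling anomalous configurations such as Lehmer-type pairs, where two ordinates are atypically close, will require enclosing both ordinates in a single enlarged rectangle and running the Rouch\'e comparison on the combined pair-count, rather than insisting on individual rectangles.
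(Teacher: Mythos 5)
There is a genuine gap, and it sits exactly where you locate your ``main obstacle.'' Your argument transfers the \emph{number} of zeros of $Z$ and of $Z_{N(t_0)}$ inside a conjugate-symmetric rectangle $R_n$, but equality of counts only forces realness when the count is $1$ (an odd count cannot be made of conjugate pairs alone). The moment a rectangle must contain two zeros --- which is unavoidable for Lehmer-type pairs, your own fallback case --- a count of $2$ is equally consistent with two real zeros of one function and a single off-axis conjugate pair of the other, so the dichotomy ``both real or both off-line'' simply does not follow from Rouch\'e. The paper itself exhibits this failure mode concretely in Section \ref{s:s5}: the Hardy--Littlewood AFE also approximates $Z(t)$ to within $O(t^{-1/2})$, yet near $t\approx 415$ it ``erroneously identifies'' two real zeros of $Z$ as a complex pair. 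So an $O(t^{-1/2})$ error bound, which is all your proposal uses, is demonstrably too coarse to decide realness; this is precisely the counter-intuitive point the remark following Theorem \ref{thm:Spira-RH} is addressing. Separately, the lower bound $\lvert Z_{N(t_0)}(t)\rvert \gg t^{-1/2+\varepsilon}$ on contours threading between consecutive zeros is not something you can ``establish'': the Lehmer phenomenon is exactly the observation that local extrema of $Z$ between consecutive zeros become extremely small, and no lower bound of this shape is known (proving one is intimately tied to the truth of RH itself). You cannot assume the conclusion's quantitative strength as an ingredient.

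The paper does not reprove the theorem here --- it imports it from \cite{J4} --- but Remark \ref{rem:acc} and the remark after Theorem \ref{thm:Spira-RH} indicate the actual route: one shows that $Z_{N(t)}(t)$ agrees, via series-acceleration, with an auxiliary accelerated section $Z^{acc}_{N(t)}(t)=Z_{N}(t;\overline{a}^{acc}_N)$ satisfying $Z(t)=Z^{acc}_{N(t)}(t)+O\!\left(e^{-\omega t}\right)$ with $\Vert\overline{1}-\overline{a}^{acc}_N\Vert\to 0$. An exponentially small error is a qualitatively different input from $O(t^{-1/2})$, and it is that strength (not a Rouch\'e count with polynomial error) that makes the equivalence of realness go through. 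If you want to salvage a Rouch\'e-style argument, you would need to run it against the accelerated approximation and still confront the fact that matching counts alone never certifies realness for multi-zero clusters; some additional mechanism (e.g.\ the conjugate-pair collision analysis of Theorem \ref{thm:A}, or sign changes of the real-valued restrictions on $\mathbb{R}$) is required to separate the two configurations.
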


For instance, Fig. \ref{fig:f1.4} displays \( \ln \left| Z(t) \right| \) (orange) alongside Spira's high-order approximation \( \ln \left| Z_{N(t)}(t) \right| \)  (blue) within the range \(412 < t < 419\):
\begin{figure}[ht!]
\centering
\includegraphics[scale=0.275]{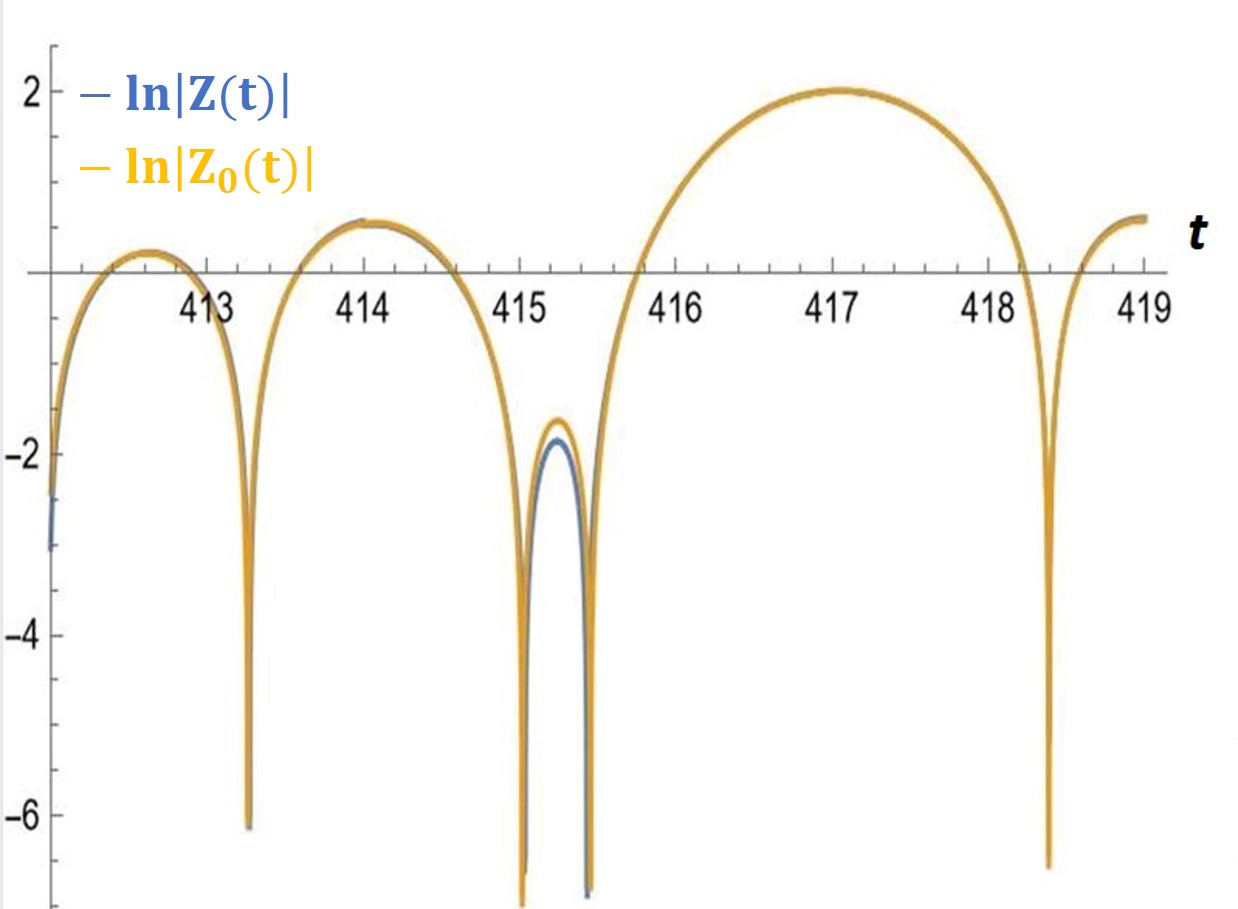}
\caption{\small Graphs of $ln \abs{Z(t)}$ (orange) and Spira's high-order approximation \( \ln \left| Z_{N(t)}(t) \right| \)  (blue) within the range \(412 < t < 419\)}
\label{fig:f1.4}
\end{figure}

Figure \ref{fig:f1.4} shows that indeed Spira's approximation captures the two real zeros missed by the Hardy-Littlewood approximation of Fig \ref{fig:f1.1}. 

\begin{rem} 
The fact that Spira's approximation \eqref{eq:Spira}, whose error is $O \left ( \frac{1}{\sqrt[4]{t}} \right )$, affords better approximation than the Riemann-Siegel formula whose error bound is better than $O \left ( \frac{1}{\sqrt[4]{t}} \right )$ even for the first term, might seem counter-intuitive. Our result in \cite{J4} demonstrates that, in practice, Spira's approximation achieves further improved accuracy beyond the $O \left ( \frac{1}{\sqrt[4]{t}} \right )$ bound obtained for it via classical methods. In fact we show that via acceleration methods that, it converges, as $t$ increases, to an, auxiliary, third approximation of exceptional level of accuracy, $O \left ( e^{- \omega \cdot t}  \right )$, where $\omega > 0$ is a specific positive constant.
\end{rem}

Our objective in the next section is to introduce a new variational method, based on Spira's approximation \eqref{eq:Spira}, aimed at overcoming the obstacles faced by classical techniques of the Riemann-Siegel formula, described so far.

\section{The Variational Approach to Edwards' Speculation} 
\label{s:s8}
The unique relation between the zeros of $Z(t)$ and those of Spira's approximation $Z_{N(t)}$ expressed in Theorem \ref{thm:Spira-RH} enables us to give Edwards' speculation a mathematically well-defined interpretation, in the setting of higher-order sections. Let us introduce the following central definition: 

\begin{dfn}[\( N \)-th $A$-Parameter Space]
For a given \( N \in \mathbb{N} \), we define \( \mathcal{Z}_N \) to be the $A$-parameter space consisting of functions of the form
\[
\label{eq:var} 
Z_N(t; \overline{a}) = Z_0(t) + \sum_{k=1}^{N} \frac{a_k}{\sqrt{k+1}} \cos (\theta(t) - \ln(k+1) t),
\]
where \( \overline{a} = (a_1, \ldots, a_N) \) belongs to \( \mathbb{R}^N \). 
\end{dfn}

In particular, within the $A$-parameter space $\mathcal{Z}_N$ we have two unique elements of special importance: 

\begin{enumerate}
\item The core $Z_0(t) = Z_N (t ; \overline{0})$, at the origin $\overline{a}=\overline{0}$. 

\item The element $Z_N(t; \overline{1})$ at $\overline{a}=\overline{1}$, for which $Z(t) \approx Z_N(t ; \overline{1})$ in $2N \leq t \leq 2N+2$. 
\end{enumerate}

Edwards' speculation now becomes a concrete question, that of studying the way zeros of $Z_N(t; \overline{a})$, within the range $2N \leq t \leq 2N+2$, change along continuous paths connecting $\overline{a}=\overline{0}$ to $\overline{a}=\overline{1}$, attempting to find such paths which keep the zeros real. Figure \ref{fig:f7} shows a schematic illustration of this idea by presenting the core $Z_0(t)  = Z_N (t ; \overline{0})$ connected to $Z_N(t; \overline{1})$ through two different curves in the $A$-parameter space $\mathcal{Z}_N$: 

	 \begin{figure}[ht!]
	\centering
		\includegraphics[scale=0.35]{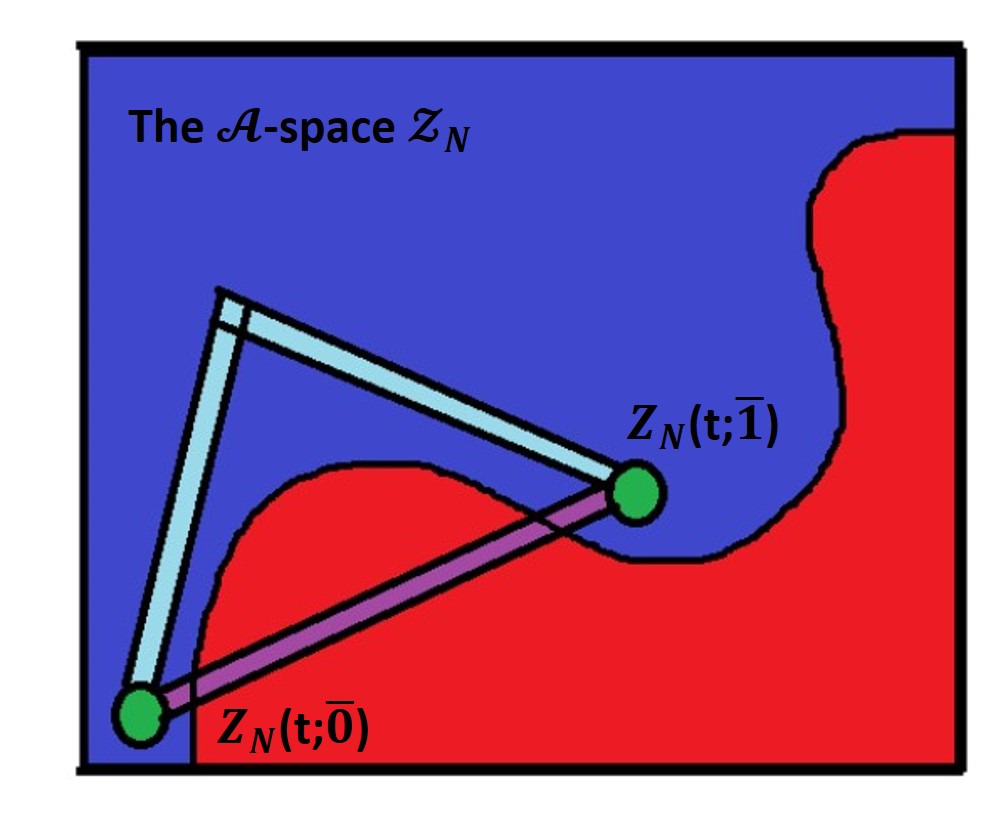} 	
		\caption{\small{Schematic illustration of the core $Z_0(t)  = Z_N (t ; \overline{0})$ connected to $Z_N(t; \overline{1})$ through two different curves in the $A$-parameter space $\mathcal{Z}_N$.}}
\label{fig:f7}
	\end{figure} 

Despite the seeming simplicity of this new variational point-of-view to Edwards' speculation, the following should be noted: 

\begin{enumerate}
\item Within the framework of the classical Riemann-Siegel formula, it is completely impossible to introduce a natural space of the form $\mathcal{Z}_N$, in which both the core $Z_0(t)$ and $Z(t)$ are elements\footnote{\hspace{0.025cm} For simplicity, we refer to $Z(t)$ as an element of $\mathcal{Z}_N$. By this, we specifically mean that Spira's section, $Z_N(t ; \overline{1})$, is within $\mathcal{Z}_N$ and serves, via a reformulation of \eqref{eq:Spira}, as a sufficiently accurate approximation of $Z(t)$ in the relevant range $2N \leq t \leq 2N+2$, for purposes of evaluating the RH.
}, for which Theorem \ref{thm:Spira-RH} is instrumental. 

Essentially, our space $\mathcal{Z}_N$ is derived by incorporating parameters $a_k$ in front of the terms $\frac{\cos(\theta(t) - \ln(k+1) t)}{\sqrt{k+1}}$ in the defining formula of Spira's approximation \eqref{eq:Spira}, and considering $N=N(t)$. Notably, all these terms share a similar structure, naturally depending on the parameter $k$ in a unified manner.

Conversely, within the Riemann-Siegel framework, there is no analogous method for integrating such parameters, primarily because, as detailed in Section \ref{s:s5}, the components used to approximate the error term are not expressed in a closed-form that is analytically manageable.

\item Studying the change of given zeros of $Z_N(t ; \overline{a})$ along continuous paths in the parameter space of $\mathcal{Z}_N$ recasts the RH into a subtle optimization problem, one that is inherently more sensitive than the Newton method. Newton's method, after all, is restricted to only one way for how zeros of $Z_0(t)$ may converge to those of $Z(t)$. Specifically, within the context of the $Z$-function, it tends to identify the closest zero of $Z(t)$ relative to the initially chosen zero $t_n$ of $Z_0(t)$. The allowance for selecting a path introduces significant sensitivity and versatility. We will further elaborate on why such an approach is particularly adept at properly locating the zero of $Z(t)$ naturally corresponding to $t_n$, overcoming the limitations identified for Newton's method.

\end{enumerate}
 
\begin{rem} \label{rem:acc}

The \(A\)-parameter space \(\mathcal{Z}_N\), though not explicitly mentioned, is implicit in \cite{J4}. In fact, the proof of Theorem \ref{thm:Spira-RH} can be viewed as demonstrating the existence of \(Z^{acc}_N(t) = Z_N (t; \overline{a}_N^{acc})\) within \(\mathcal{Z}_N\), referred to as the accelerated approximation satisfying \(Z(t)=Z^{acc}_{N(t)}(t)+O \left( e^{- \omega \cdot t } \right)\) as well as \(\Vert \overline{1} - \overline{a}^{acc} \Vert \rightarrow 0 \) as \(N \rightarrow \infty\), with the distance defined by the natural Euclidean norm in \(\mathcal{Z}_N\).

\end{rem}

Consider a smooth parametrized curve $\gamma(r)$  in $\mathcal{Z}_N$ for $r \in [0, 1]$, connecting the core function $Z_0(t)$, at $r = 0$, to $Z_N (t; \overline{1})$, at $r=1$. Locally, for small enough $r>0$, every zero $t_n$ of $Z_0(t)$ can be smoothly extended to a unique zero $t_n(r)$ of $Z_N(t ; \gamma(r))$. In general, one can always extend the definition of $t_n(r)$ in a continuous, yet not necessarily smooth and unique manner\footnote{\hspace{0.025cm} In analogy, consider the $1$-parametric family $\frac{1}{2}(1-t)(1+t) = r$ within the space of quadrics. For $r = 0$, there are two zeros $t_{\pm} = \pm 1$. For $0 \leq r < \frac{1}{2}$, one can uniquely and smoothly extend $t_{\pm}(r) = \pm \sqrt{1-2r}$. However, at $r = \frac{1}{2}$, the two zeros collide into a double zero at $t = 0$, and subsequently turn into two conjugate complex zeros. Specifically, for $\frac{1}{2} < r \leq 1$, the paths can be continued in a continuous but non-smooth and non-unique manner, necessitating a choice regarding which path proceeds to the upper half and which to the lower half of the complex plane, beyond $r=\frac{1}{2}$.
}. 

A priori, one might expect that the zeros $t_n(r)$ can move arbitrarily in the complex plane, around $t_n$. The following theorem demonstrates that there is a crucial restriction on the zero to, locally, remain real, independent of the curve $\gamma(r)$:

\begin{thm}[Local realness of zeros] \label{thm:A}
Let $\gamma(r)$  be a smooth parametrized curve in $\mathcal{Z}_N$ for $r \in [0, 1]$, connecting the core function $Z_0(t)$, at $r = 0$, to $Z_N (t; \overline{1})$, at $r=1$. Then the zeros \( t_n(r) \) of \( Z_N(t; \gamma(r)) \) remain well-defined, smooth, and real as long as they do not collide with other consecutive zeros, that is, as long as $t_n(r) \neq t_{n  \pm 1}(r)$.
\end{thm}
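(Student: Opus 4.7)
The plan is to combine the implicit function theorem with the complex-conjugate symmetry of zeros of real entire functions. For every $\overline{a} \in \mathbb{R}^N$, the map $Z_N(t; \overline{a})$ is entire in $t$, real-analytic jointly in $(t, \overline{a})$, and real-valued on $\mathbb{R}$. Consequently its non-real zeros in $\mathbb{C}$ occur in complex conjugate pairs, so a simple real zero can only leave the real axis by first colliding with another real zero to produce a multiple real zero, which then splits into a conjugate pair. At $r = 0$ each zero $t_n$ of $Z_0(t) = \cos(\theta(t))$ is simple, since
\begin{equation*}
Z_0'(t_n) = -\theta'(t_n) \sin(\theta(t_n)) = \pm \theta'(t_n) \neq 0,
\end{equation*}
so the implicit function theorem yields a unique smooth real branch $t_n(r)$ defined on a neighborhood of $r = 0$.

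Next, I would let $[0, r^*)$ denote the maximal subinterval of $[0, 1]$ on which $t_n(r)$ extends smoothly and remains a simple real zero of $Z_N(t; \gamma(r))$, and argue by contradiction that $r^* = 1$. Assuming $r^* \leq 1$, consider the limit $t^* := \lim_{r \to r^{*-}} t_n(r)$. Using that $Z_N$ depends continuously on $\overline{a}$ and that zeros of entire functions vary continuously with their parameters, $t^*$ exists and is a (possibly multiple) real zero of $Z_N(t; \gamma(r^*))$. If $t^*$ were simple, the implicit function theorem would extend $t_n(r)$ past $r^*$ as a smooth real branch, contradicting the maximality of $r^*$. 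Hence $t^*$ must be a multiple zero, which necessarily corresponds to a coalescence of two or more smooth real branches.

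Finally, I would invoke the natural ordering of real zeros. Throughout $[0, r^*)$ the branches $\ldots, t_{n-1}(r), t_n(r), t_{n+1}(r), \ldots$ are smooth, real and, by the maximality assumption, pairwise distinct, so by the intermediate value theorem their strict order is preserved. Hence the only branches that can merge with $t_n(r)$ as $r \to r^{*-}$ are the immediate neighbors $t_{n \pm 1}(r)$. Under the stated hypothesis $t_n(r) \neq t_{n \pm 1}(r)$ for every $r \in [0, 1]$, such a collision is excluded, forcing $r^* = 1$ and proving the theorem.

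The hardest step will be justifying rigorously that the limit $t^* = \lim_{r \to r^{*-}} t_n(r)$ exists as a finite real number and is a multiple zero, ruling out scenarios in which the branch either escapes to infinity or jumps discontinuously as $r \to r^{*-}$. This can be handled by combining compactness of $\gamma([0, r^*])$ in $\mathcal{Z}_N$ with Hurwitz's theorem to control, inside a small disk around $t^*$, the number of zeros of $Z_N(t; \gamma(r))$ as $r$ approaches $r^*$, thereby isolating the adjacent-collision scenario as the sole possible obstruction to continuing the real smooth branch.
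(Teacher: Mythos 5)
Your proposal is correct and follows essentially the same route as the paper: the paper's proof likewise rests on the self-conjugacy of the zero set of \(Z_N(t;\gamma(r))\) (via \(\overline{\theta(\overline{t})}=\theta(t)\), so the function is real on the real axis) combined with continuity in \(r\), concluding that zeros can only leave the real line in conjugate pairs after a collision of adjacent real zeros. Your write-up supplies the details the paper leaves implicit — simplicity of the zeros of \(Z_0\), the implicit-function-theorem continuation, the ordering argument restricting collisions to immediate neighbors, and Hurwitz's theorem to control the limit — which is a strengthening of the same argument rather than a different one.
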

\begin{proof}
The Riemann-Siegel theta function satisfies $\overline{\theta(\overline{t})} = \theta(t)$. Consequently, all terms of $Z_N(t; \gamma(r))$ satisfy 
\[
\overline{\cos(\theta(\overline{t}) - \ln(k+1) \overline{t})} = \cos(\theta(t) - \ln(k+1) t).
\]
This implies that the set of zeros of \(Z_N(t; \gamma(r))\) is self-conjugate for any \(r \in [0, 1]\). That is, if \(t \in \mathbb{C}\) is a zero of \(Z_N(t; \gamma(r))\), then its complex conjugate \(\overline{t}\) is also a zero. Given that \(t_n(0) = t_n \in \mathbb{R}\) are all real and \(t_n(r)\) varies continuously with \(r\), zeros can only transition off the real line as conjugate pairs in the event of a collision between adjacent zeros.
\end{proof}

Theorem \ref{thm:A} implies that, although elements of \(\mathcal{Z}_N\) have an infinite number of zeros, these zeros exhibit behaviour rather similar to that of polynomials with real coefficients. In particular, Theorem \ref{thm:1} from the introduction follows directly:

\begin{cor}[Edwards' Speculation for High-Order Sections] \label{thm:ESHO}
The Riemann Hypothesis holds if and only if, for any \(n \in \mathbb{Z}\), there exists a path \(\gamma(r)\) in \(\mathcal{Z}_{\left[ \frac{t_n}{2} \right]}\) from \(\overline{a}=\overline{0}\) to \(\overline{a}=\overline{1}\), along which \(t_n\) does not collide with its adjacent zeros \(t_{n\pm 1}\). That is, for which \(t_n(r) \neq t_{n \pm 1}(r)\) for all \(r \in [0,1]\).
\end{cor}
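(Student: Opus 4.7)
The plan is to derive the corollary as a direct consequence of Theorem \ref{thm:A} combined with Theorem \ref{thm:Spira-RH}, by splitting the equivalence into its two directions. The reverse implication is essentially immediate from Theorem \ref{thm:A}, while the forward implication requires a small geometric/topological argument, which I flag as the main obstacle.

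For the $(\Leftarrow)$ direction, I would assume that for every $n \in \mathbb{Z}$ there is a path $\gamma_n(r)$ from $\overline{a}=\overline{0}$ to $\overline{a}=\overline{1}$ in $\mathcal{Z}_{[t_n/2]}$ along which $t_n(r) \neq t_{n\pm 1}(r)$ for all $r \in [0,1]$. Theorem \ref{thm:A} then guarantees that $t_n(r)$ remains well-defined, smooth, and real throughout, so the endpoint value $t_n(1) \in \mathbb{R}$ is a real zero of $Z_N(t; \overline{1}) = Z_{N(t)}(t)$. Letting $n$ vary over $\mathbb{Z}$, a brief density-matching argument, based on the shared asymptotic zero density $\sim \frac{1}{2\pi}\log(t/2\pi)$ of $Z_0(t)$ and $Z_{N(t)}(t)$, shows that the family $\{t_n(1)\}_n$ exhausts all zeros of $Z_{N(t)}(t)$ in the relevant window. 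All zeros of Spira's high-order section are thus real, and Theorem \ref{thm:Spira-RH} yields the RH.

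For the $(\Rightarrow)$ direction, assume the RH. Theorem \ref{thm:Spira-RH} then gives that $Z_N(t; \overline{1}) = Z_{N(t)}(t)$ has only real zeros, so a real zero $\tilde{t}_n$ naturally corresponds to $t_n$ at $\overline{a} = \overline{1}$. I would first try the straight-line candidate $\gamma_0(r) = r\overline{1}$; by Theorem \ref{thm:A} the zero $t_n(r)$ stays real until the first possible collision. If no collision occurs, we are done. Otherwise, the collision locus $\mathcal{C} \subset \mathbb{R}^N$, cut out by the simultaneous vanishing of $Z_N(t;\overline{a})$ and $\partial_t Z_N(t;\overline{a})$, is a real-analytic subvariety of codimension at least one; since $N = [t_n/2]$ is large for all nontrivial $n$, one can locally perturb $\gamma_0$ transversely within $\mathbb{R}^N$ to a homotopic path that avoids $\mathcal{C}$ in a neighborhood of the collision parameter, producing the required path.

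The main obstacle is precisely to make the last perturbation step rigorous, that is, to ensure that although $\mathcal{C}$ is generically codimension one, its connected components do not form a barrier separating $\overline{0}$ from $\overline{1}$ in $\mathbb{R}^N$. A cleaner resolution is to route the path through the accelerated parameter $\overline{a}^{acc}$ from Remark \ref{rem:acc}: under the RH, the exponential accuracy $Z(t) = Z^{acc}_N(t) + O(e^{-\omega t})$ together with the simplicity and separation of the zeros of $Z(t)$ ensures that $\overline{a}^{acc}$ lies safely off $\mathcal{C}$, so that a piecewise path $\overline{0} \to \overline{a}^{acc} \to \overline{1}$ (perturbed if necessary within each leg) will satisfy the required no-collision condition.
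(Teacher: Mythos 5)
Your overall strategy --- deriving the corollary from Theorem \ref{thm:A} together with Theorem \ref{thm:Spira-RH} --- is exactly the paper's: the paper asserts that the corollary ``follows directly'' from Theorem \ref{thm:A} and supplies no further argument, so your two-direction unpacking is the natural expansion of what is left implicit. Your $(\Leftarrow)$ direction is the substance of that derivation and is essentially right: non-collision keeps $t_n(r)$ real by Theorem \ref{thm:A}, so $t_n(1)$ is a real zero of $Z_N(t;\overline{1})$, and Theorem \ref{thm:Spira-RH} converts realness of the zeros of Spira's section into the RH. The exhaustion step you flag (that the $t_n(1)$ account for \emph{all} zeros of $Z_N(t;\overline{1})$ in the relevant window $2N\le t\le 2N+2$) is genuinely needed and is not addressed by the paper either; by the self-conjugacy argument in the proof of Theorem \ref{thm:A}, a non-real zero at $r=1$ must trace back to a collision of some pair $t_m(r),t_{m\pm1}(r)$, so the clean way to close this is to apply the hypothesis with $n=m$ rather than to invoke asymptotic zero densities --- though one must then confront the fact that the endpoint $t_m(1)$ may depend on the chosen path, so the labelling of zeros at $\overline{a}=\overline{1}$ needs care.

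The genuine gap is the $(\Rightarrow)$ direction, and you have diagnosed it correctly but not closed it. Knowing (via Theorem \ref{thm:Spira-RH}) that both endpoints $\overline{0}$ and $\overline{1}$ carry only real zeros does not produce a collision-free path: the collision locus is a real hypersurface of codimension one in $\mathbb{R}^N$, and a codimension-one set can perfectly well separate $\overline{0}$ from $\overline{1}$, so ``perturb transversely'' establishes nothing. Indeed the paper's own Section \ref{s:ex} shows that for $n=730120$ the linear path \emph{does} cross the locus, and a non-crossing path is found only experimentally. Routing through $\overline{a}^{acc}$ of Remark \ref{rem:acc} does not help either: that $\overline{a}^{acc}$ itself lies off the collision locus says nothing about the connectivity of the complement along the legs $\overline{0}\to\overline{a}^{acc}\to\overline{1}$. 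Since the paper offers no argument for this direction, you have not introduced an error so much as exposed a step the paper leaves unproved; but as written your proposal does not establish the ``only if'' half, and the $\overline{a}^{acc}$ detour should not be presented as a resolution.
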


In its essence, Corollary \ref{thm:ESHO} offers a new point-of-view on the RH, 
exhibiting it as a type of non-linear optimization problem. Indeed, it shows that RH 
requires finding a curve $\gamma_n(r)$ connecting $Z_0(t)$ to $Z_N(t)$ in a non-colliding 
way, for any $n \in \mathbb{Z}$. Taking the analogy to polynomials with real coefficients, 
we anticipate the existence of a discriminant function $\Delta_n(\overline{a})$ on the space 
$\mathcal{Z}_N$, such that the hyper-surface $\Delta_n (\overline{a})=0$ is the closure of all 
instances such that $t_n(r)=t_{n+1}(r)$. 

Hence, we are looking to effectively construct the 
curve $\gamma_n(r)$, starting from the core $Z_0(t)$ and incrementally increasing $\overline{a}$ 
from $\overline{a}=\overline{0}$ to $\overline{a}=\overline{1}$ in a manner that minimizes 
the decrease in $\Delta_n(\overline{a})$, avoiding collision, that is vanishing of $\Delta_n(\overline{a})=0$. The main tool for solving such problems is the 
Karush-Kuhn-Tucker optimization theorem, see \cite{Kar,KT}. However, if one is willing to ease 
on optimality, analysis of $\Delta_n (\overline{a})$ might lead to the description of curves 
$\gamma_n(r)$, more natural than the optimal one. This exploration, would require the formal definition and analysis of $\Delta_n(\overline{a})$, which is reserved for future discourse. 

\section{The $A$-variation method - Example for the $730120$-th Zero}
\label{s:ex}

In this section we describe the application of the \(A\)-variation method to the specific case of the \(730120\)-th zero, for which the classical Newton method fails, as shown in Section \ref{s:s6}. While our focus here is on numerical examples, the rationale behind their presentation is twofold. Firstly, they demonstrate the \(A\)-variation method in action for a specific case, providing essential illustration of the concepts discussed thus far. Secondly, we propose that the principles outlined may reflect general, universal properties of the variation of zeros of \(Z(t)\), underscoring the necessity of more comprehensive theoretical investigation.

\subsection{The linear curve} The most natural curve to take in the space $\mathcal{Z}_{N(t)}$ to connect $Z_0(t)$ to $Z_{N(t)}(t ; \overline{1})$ is the linear curve given by 
\begin{equation} 
Z_{N(t)}(t ; \overline{r}) := Z_0(t) +  \sum_{k=1}^{N(t)} \frac{r}{\sqrt{k+1}} cos(\theta(t)-ln(k+1)t), 
\end{equation} 
for $r \in [0,1]$, illustrated in purple in Fig. \ref{fig:f7}. We consider the dynamic evolution of the functions $Z_{N(t)}(t ; \overline{r})$ from $r=0$ to $r=1$ in the range $450613.4 \leq t \leq 450614.6$, within which we have the three zeros $t_{n-2},t_{n-1},t_n$ of $Z_0(t)$ for $n= 730121$.  Given the varied dynamics across different segments of $r \in [0,1]$, we will present each part individually. 

\begin{description}
\item[Step 1] Figure \ref{fig:linear1} shows the graphs of $\ln \abs{Z_{N(t)}(t ; r)}$ for various values within the initial interval $r \in [0,0.2425]$:
	\begin{figure}[ht!]
	\centering
		\includegraphics[scale=0.3]{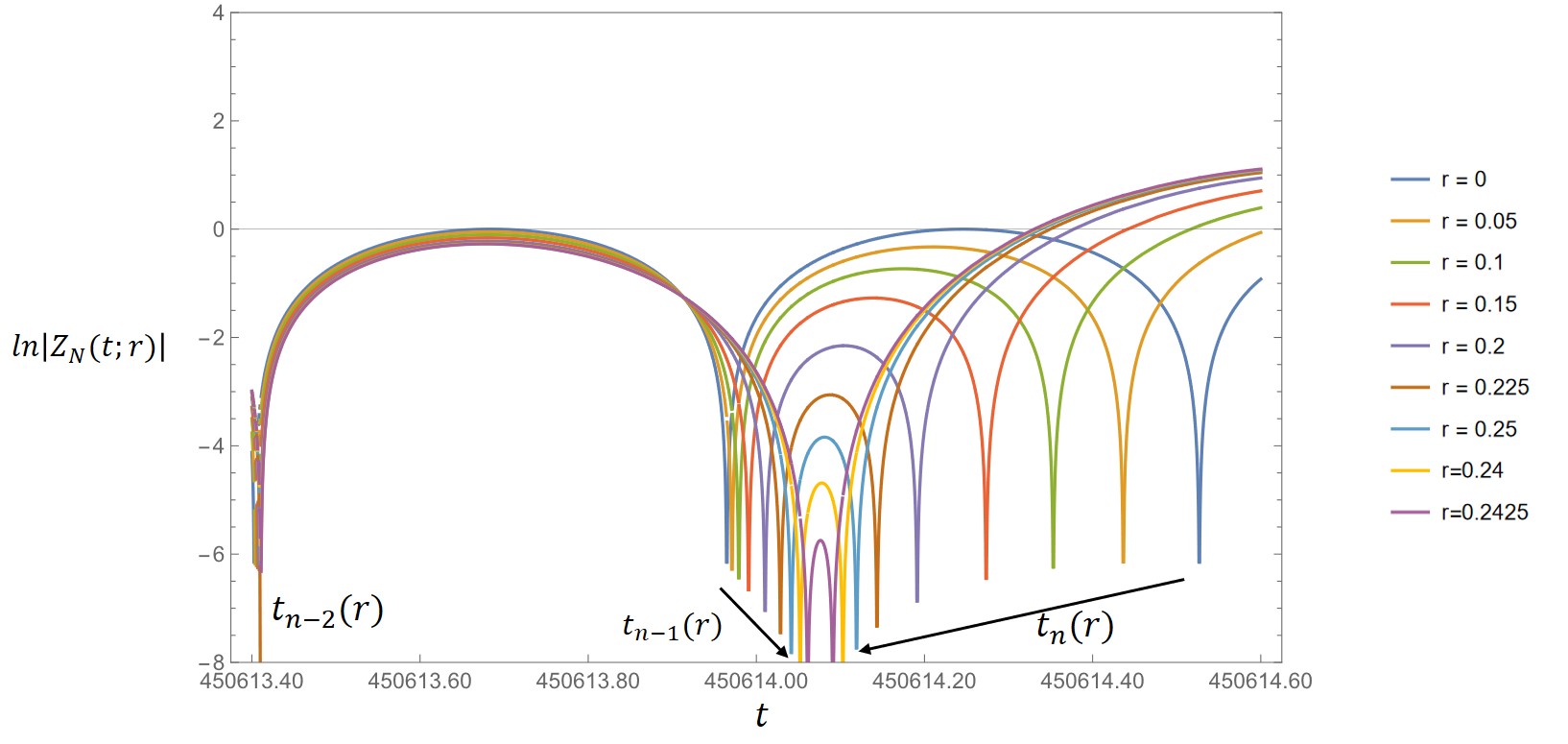} 	
		\caption{\small{$\ln \abs{Z_{N(t)}(t ; r)}$ in $450613.4 \leq t \leq 450614.6$ for  $r \in [0,0.2425]$.}}
\label{fig:linear1}
	\end{figure} 

Within the interval $r \in [0,0.2425]$ the two zeros $t_{730120}(r)$ and $t_{730121}(r)$ are seen to proceed to collide one with the other, while $t_{730119}(r)$ remains relatively static.	

\item[Step 2] Figure \ref{fig:linear2} shows the graphs of $\ln \abs{Z_{N(t)}(t ; r)}$ for various values within the interval $r \in [0.2425,0.85]$, together with $ln \abs{Z_0(t)}$ (blue):
	
	\begin{figure}[ht!]
	\centering
	\hspace{0.4cm}
		\includegraphics[scale=0.3]{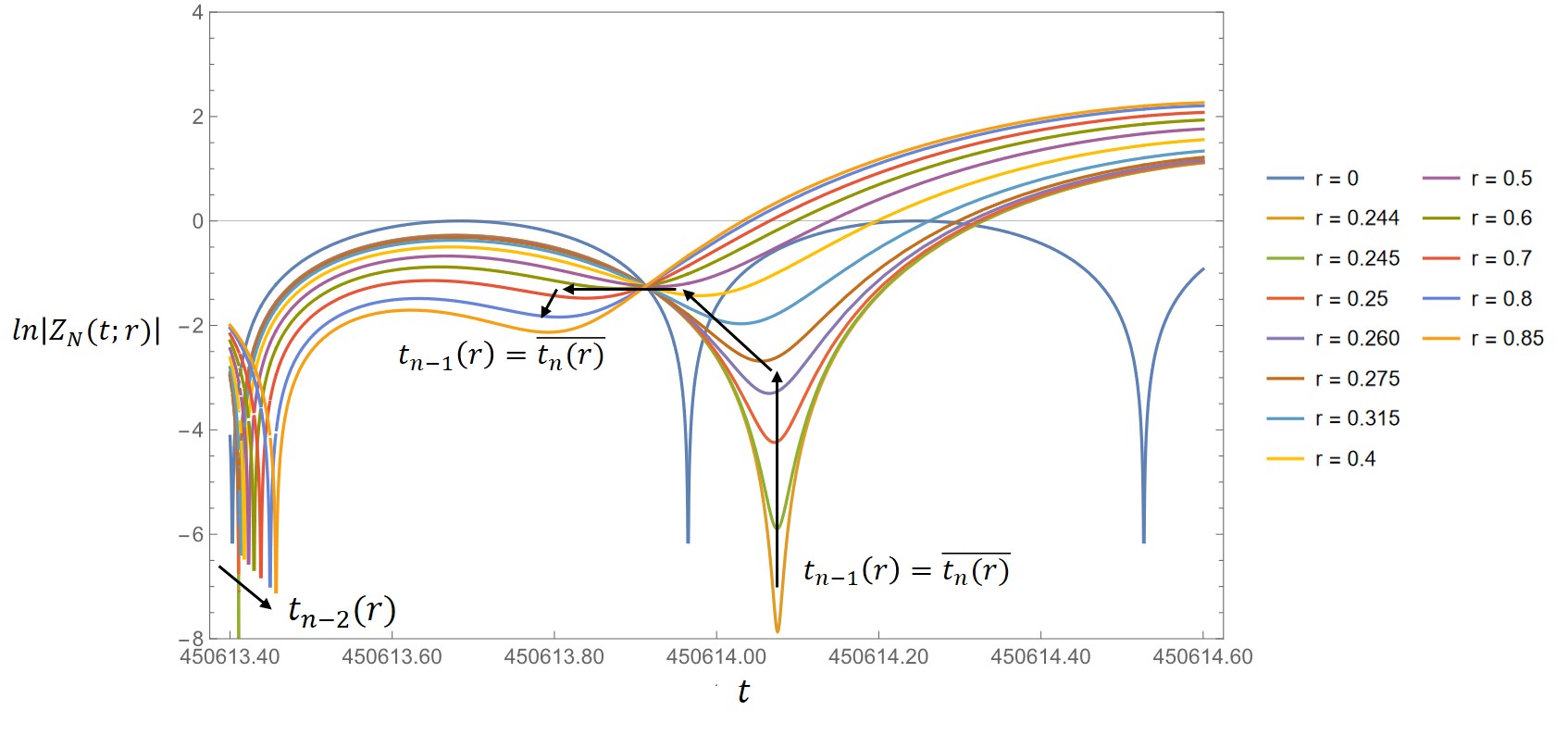} 	
		\caption{\small{$\ln \abs{Z_{N(t)}(t ; r)}$ in $450613.4 \leq t \leq 450614.6$ for  $r \in [0.2425,0.85]$.}}
\label{fig:linear2}
	\end{figure}

	 The figure shows the development of $\ln \abs{Z_{N(t)}(t ; r)}$ after the collision of $t_{730120}(r)$ and $t_{730121}(r)$. In particular, in this region the two corresponding zeros are not real and hence are not presented in the figure, which shows the values only over the real $t$-axis. Indeed, presenting the values $\ln \abs{Z_{N(t)}(t ; r)}$ in a three dimensional plot over the complex plane would have shown that the two corresponding zeros are now actually complex conjugate to each other, as guaranteed in the proof of Theorem \ref{thm:A}, proceeding to move together parallel down to the left of the $t$-axis.

\item[Step 3] Figure \ref{fig:linear3} shows the graphs of $\ln \abs{Z_{N(t)}(t ; r)}$ for various values within the interval $r \in [0.85,0.985]$: 

	\begin{figure}[ht!]
	\centering
		\includegraphics[scale=0.3]{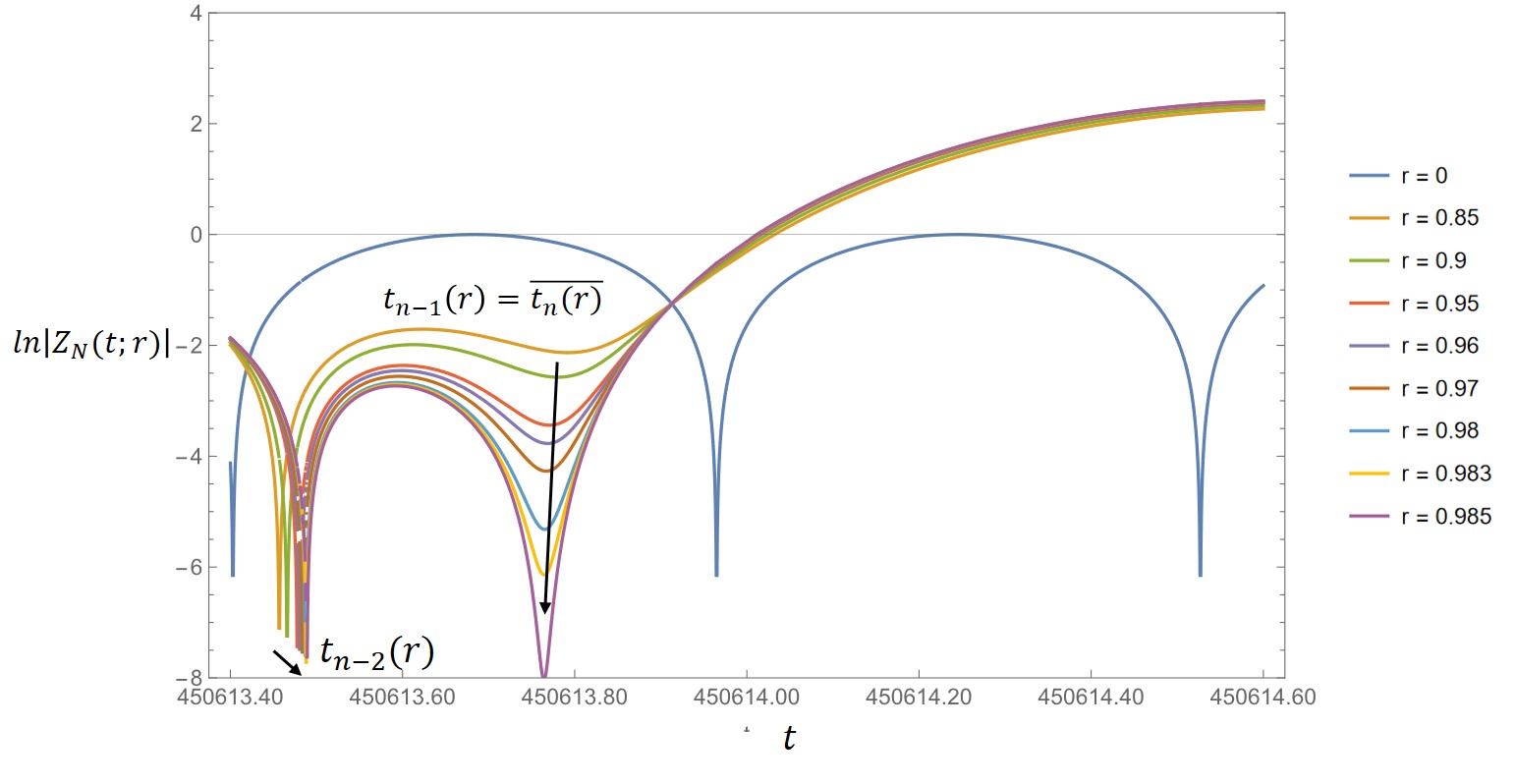} 	
		\caption{\small{$\ln \abs{Z_{N(t)} (t ; r)}$ in $450613.4 \leq t \leq 450614.6$ for $r \in [0.85,0.985]$.}}
\label{fig:linear3}
	\end{figure} 

While in this region the two zeros are still not real, they are starting their return to the real $t$-axis, as the RH obliges. 	
	
\item[Step 4] Figure \ref{fig:linear4} shows the graphs of $\ln \abs{Z_{N(t)}(t ; r)}$ for various values within the final interval $r \in [0.985,1]$:
	
	\begin{figure}[ht!]
	\centering
		\includegraphics[scale=0.3]{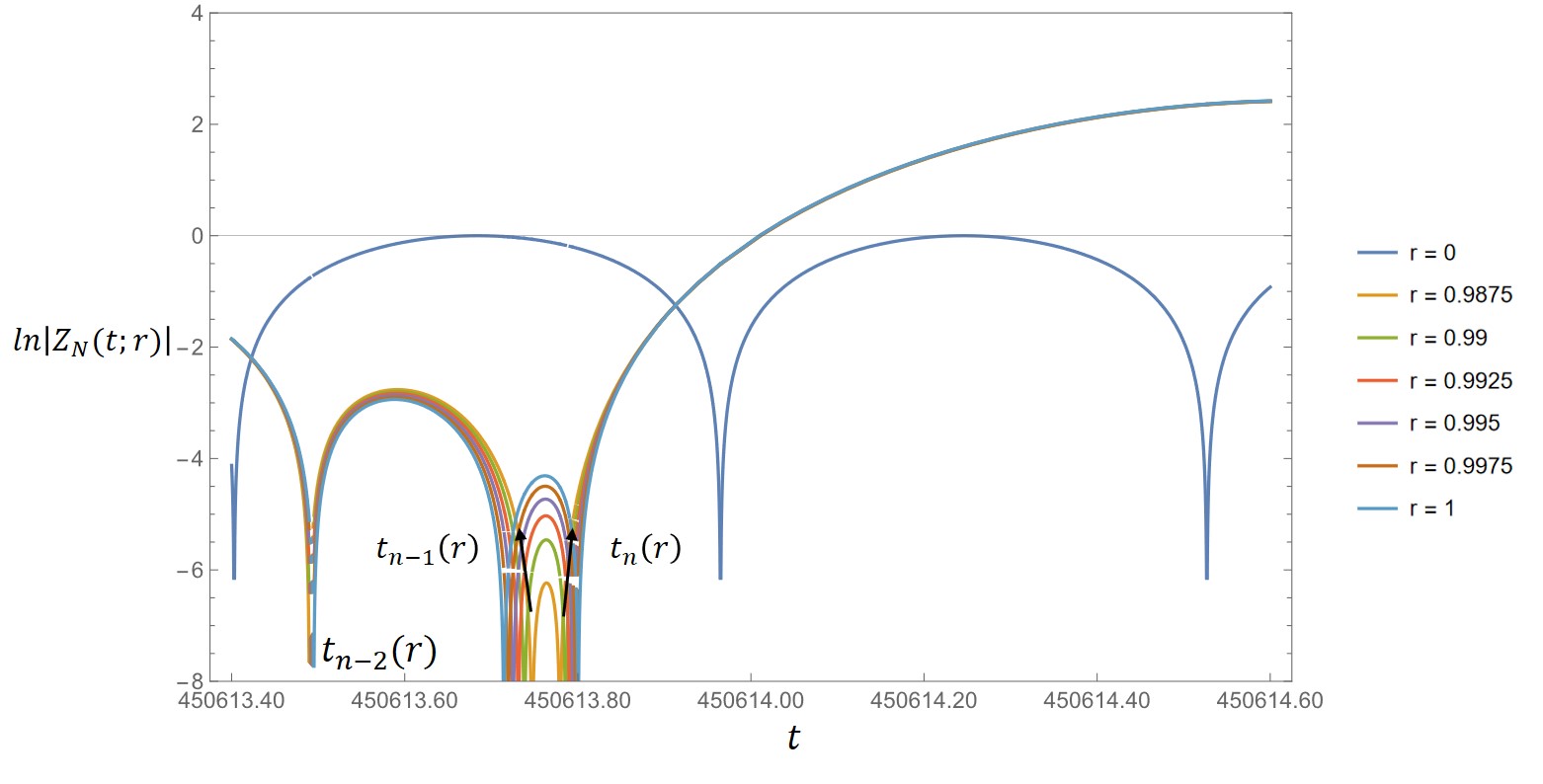} 	
		\caption{\small{$\ln \abs{Z_{N(t)}(t ; r)}$ in $450613.4 \leq t \leq 450614.6$ for  $r \in [0.985,1]$.}}
\label{fig:linear4}
	\end{figure} 
	
	In this region of $r \in [0.985,1]$ the two zeros are real and have returned to the real $t$-axis, they advance to position themselves in their final location as zeros of $Z(t)$, compare Fig. \ref{fig:Newton}. 
\end{description}

The above example of the linear curve $Z_{N(t)}(t; r)$ shows that it already seems to mitigate a few of the problems encountered with the classical Newton method. Indeed, the issue encountered in Example \ref{ex:Newton} was the convergence of both zeros $t_{730120}$ and $t_{730121}$ of $Z_0(t)$ to a single zero $\widetilde{t}_{730121}$ of $Z(t)$, missing $\widetilde{t}_{730120}$. In contrast, tracking the dynamics of $Z_N(t; r)$, we can trace two continuous paths $t_{730120}(r)$ and $t_{730121}(r)$, starting from $t_{730120}$ and $t_{730121}$ at $r=0$ and ending at both $\widetilde{t}_{730120}$ and $\widetilde{t}_{730121}$ for $r=1$, as required. 
 
 However, these paths are not unique due to collisions\footnote{In Fig. \ref{fig:f7}, these collisions are schematically depicted as instances where the linear curve (in purple) transitions from the blue region, where the zeros are real, to the red region, where the zeros are complex conjugate, and vice versa. The non-colliding curve is represented in cyan.} occurring around $r=0.2425$ and $r=0.985$. Given our pursuit for general theoretical principles rather than empirical verification, such colliding paths introduce numerous disadvantages. For instance, while it is feasible to numerically trace the paths of zeros $t_n(r)$ in specific examples, the general scenario raises difficulties as collisions could potentially lead to complex configurations, especially if a pair of conjugate zeros meets another pair from a different collision, complicating theoretical tracking. 
 
Furthermore, and most critically for the RH, if one allows for collisions, it is not clear what general mechanism should guarantee the return of the zeros to the real line at a later time, as indeed observed to occur in the presented example. In fact, it can be argued that theoretical identification of such a mechanism is instrumental for understanding of the underlying origin of the RH.  

\subsection{A non-colliding curve} Due to the limitations outlined above our approach is to try to replace the linear curve by a more sensitive curve $\gamma_n(r)$ within the space $\mathcal{Z}_N$, avoiding collisions altogether. This is where the variational method shows its true power, as the space $\mathcal{Z}_N$ allows for a vast option of curves through which to connect $Z_0(t)$ to $Z_N(t ; \overline{1})$, besides the linear curve. 

In order to describe such a non-colliding curve, let us note that we observe two distinct phenomena occurring in the dynamics of the zeros $t_n(r)$ presented for the linear curve. Firstly, the zeros draw nearer to each other and, secondly, they shift together to the left along the $t$-axis. In particular, we view the collision occurring due to the fact that the shift to the left occurs in an uncorrelated manner with the tendency of the zeros to come in proximity to one another.  We thus aim to construct the non-colliding curve by breaking the linear curve into two separate curves, each responsible for its own task, to occur consecutively one after the other. The first is responsible for achieving the shift to the left while minimizing the relative change in distance between the two zeros, and the second leads to $Z_{N(t)}(t; \overline{1})$, bringing the two zeros closer to their final position. 

We consider the following $2$-parametric system of sections arising from a splitting of the indices \eqref{eq:var}:

\begin{multline} \label{eq:split}
Z_N(t;r_1,r_2):= Z_0(t) +  \sum_{k \in I_{shift}} \frac{r_1}{\sqrt{k+1}} cos(\theta(t)-ln(k+1)t) + \\+  \sum_{k \in I_{collide}}  \frac{r_2}{\sqrt{k+1} }cos(\theta(t)-ln(k+1)t)  \in \mathcal{Z}_N,
\end{multline} 
where we take the shifting indices to be 
\begin{equation} 
I_{shift}= \left \{ 1,2,4,6,12 \right \}, 
\end{equation} 
and the colliding set of indices $I_{collide}$ is taken to be the complement of $I_{shift}$. In this case, the split of indices into shifting and colliding has been obtained experimentally. 

\begin{description}
\item[Shifting stage] 
	Figure \ref{fig:noncol1} shows graphs of $\ln \abs{Z_{N(t)}(t ; r_1 ,r_2)}$ for the following values of $(r_1,r_2)$: $(0,0)$ (blue), $(0.25,0.05)$ (yellow), $(0.55,0.15)$ (green), (0.75,0.25) (red), $(1,0.41)$ (purple) in the range $450613.58 \leq t \leq 450614.8$: 
	
	\begin{figure}[ht!]
	\centering
		\includegraphics[scale=0.3]{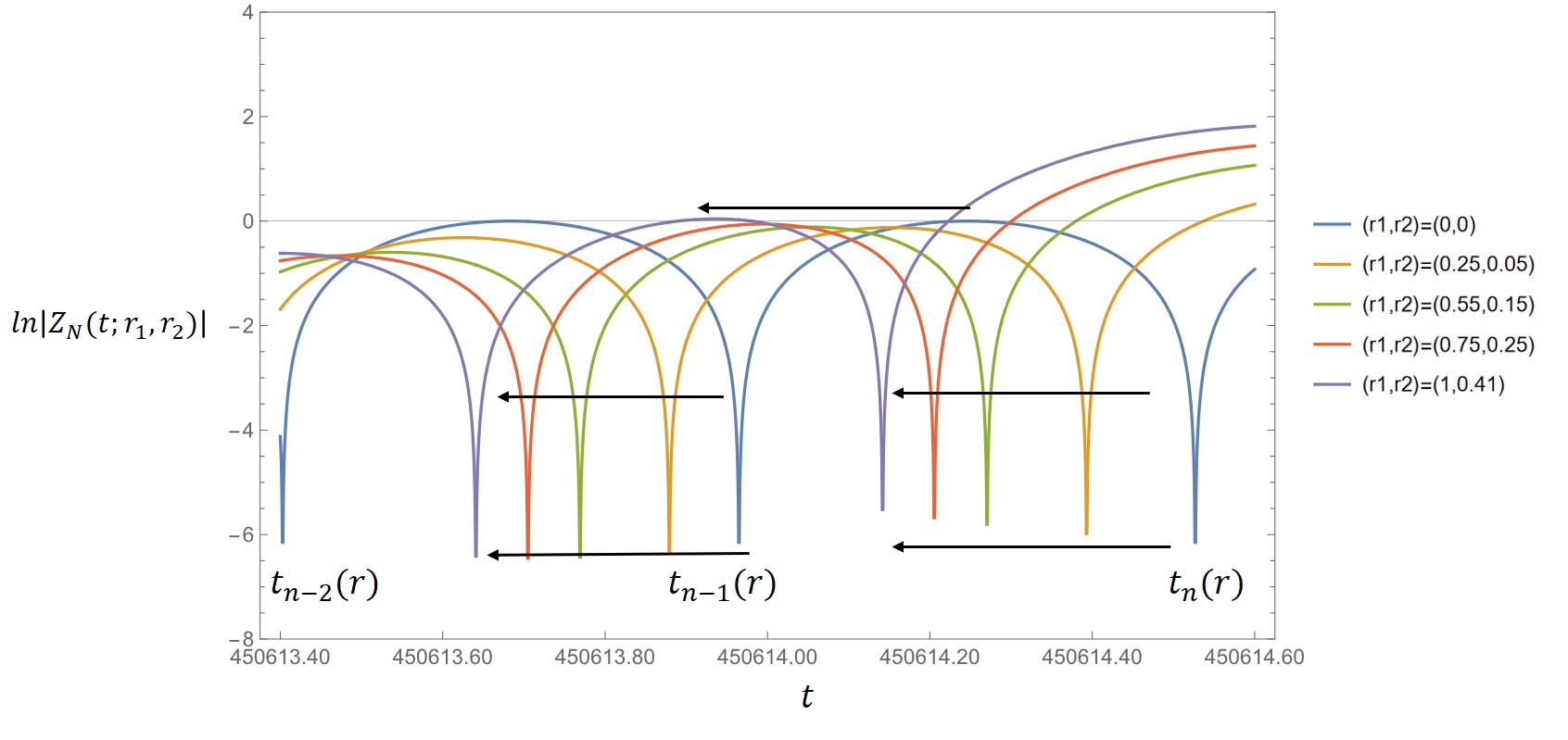} 	
		\caption{\small{Graphs of $\ln \abs{Z_{N(t)}(t ; r_1 ,r_2)}$ along the following values of $(r_1,r_2)$:  $(0,0)$ (blue), $(0.25,0.05)$ (yellow), $(0.55,0.15)$ (green), (0.75,0.25) (red), $(1,0.41)$ (purple) in the range $450613.58 \leq t \leq 450614.8$.}}
\label{fig:noncol1}
	\end{figure} 

We see that as $(r_1,r_2)$ transition along the outlined curve from $(0,0)$ (blue) to $(1,0.41)$ (purple) the zeros $t_{n}(r)$ and $t_{n-1}(r)$ transition continuously to the left in a way that generally preserves their relative distance. In particular, contrary to the case of the linear curve, this transition to the left is now organized in a way that avoids the un-required collision.  

\item[Descending stage] 
	Figure \ref{fig:des} shows the graphs of $\ln \abs{Z_{N(t)}(t ; r_1 ,r_2)}$ along the following points $(r_1,r_2)$:  $(1,0.41)$ (blue), $(1,0.6)$ (yellow), $(1,0.8)$ (green), (1,0.95) (red), $(1,1)$ (purple) in the range $450613.58 \leq t \leq 450614.8$:

	\begin{figure}[ht!]
	\centering
		\includegraphics[scale=0.3]{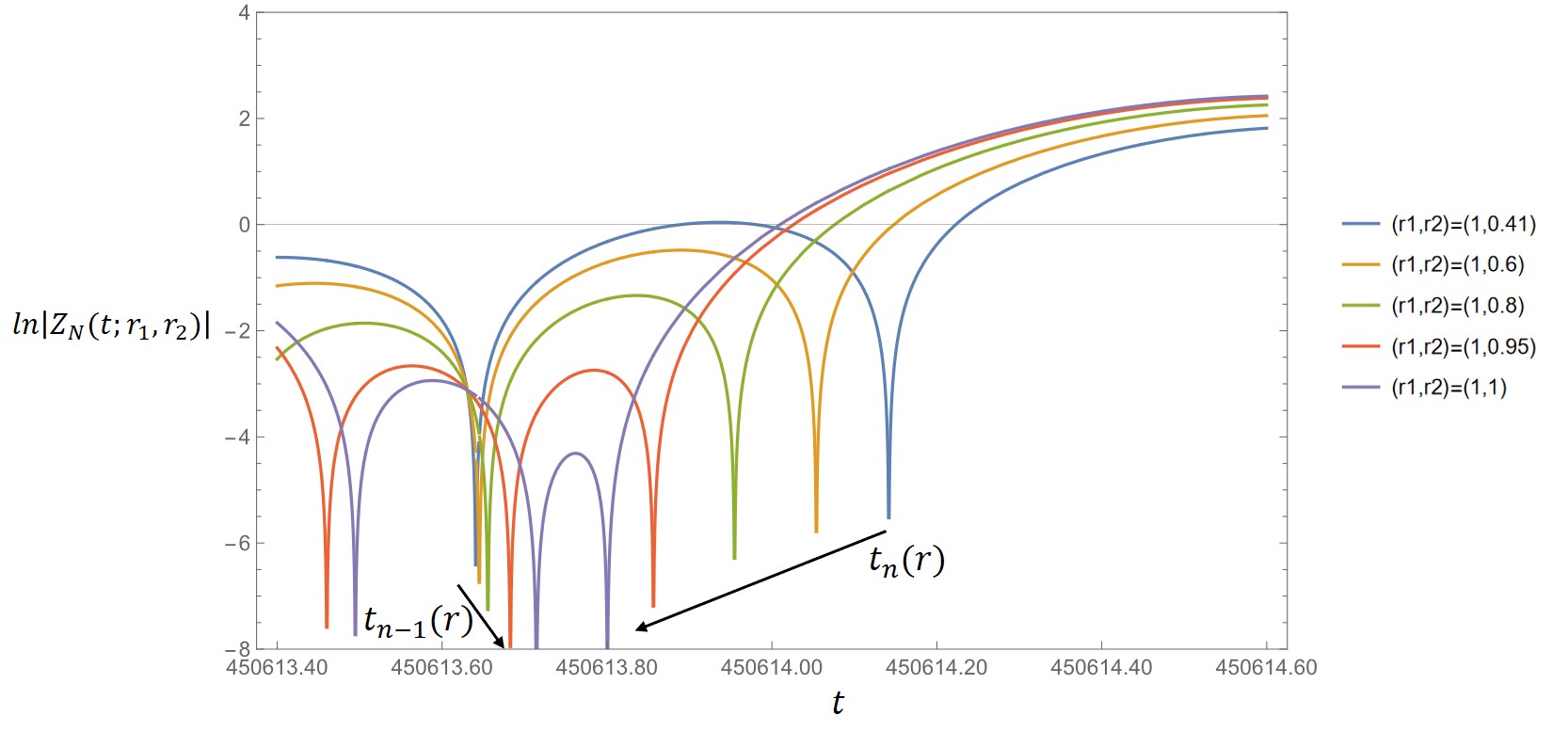} 	
		\caption{\small{Graphs of $\ln \abs{Z_{N(t)}(t ; r_1 ,r_2)}$ along the following points $(r_1,r_2)$:  $(1,0.41)$ (blue), $(1,0.6)$ (yellow), $(1,0.8)$ (green), (1,0.95) (red), $(1,1)$ (purple) in the range $450613.58 \leq t \leq 450614.8$.}}
\label{fig:des}
	\end{figure} 
	\end{description}
	
	As required, collisions have been avoided altogether in the transition from $Z_0(t)$ to $Z_N(t; \overline{1})$ achieved by concatenating the shifting and descending stages described above. In particular, the paths of zeros $t_{730120}(r)$ and $t_{730121}(r)$ are defined in a unique and continuous way, starting from $t_{730120}$ and $t_{730121}$ and terminating at $\widetilde{t}_{730120}(r)$ and $\widetilde{t}_{730121}(r)$, respectively, in a way that identifies the realness of both zeros. 
	
	The overall process can be conceptualized by viewing the shifting indices as utilizing their potential for a collision to instead shift the zeros to the left, effectively depleting the energy required for further collisions. On the other hand, in situations where the linear curve inherently avoids collisions, the shifting component's contribution tends to be minor.

	In this example of $n=730121$, the split of indices into shifting and descending indices has been mainly obtained experimentally, as mentioned. Justifying the existence of a similar split for any $n \in \mathbb{Z}$, where the linear curve requires correction, even if just through a heuristic plausibility argument, would greatly enhance our understanding of the RH. This may necessitate the discovery of new fundamental properties of the function $Z(t)$. We propose that a broad theoretical exploration of such traits would require in-depth study of the geometric aspects of the proposed discriminant $\Delta_n(\overline{a})$.
	
\begin{rem}[The \(A\)-variation method and Newton's method] In fact, the role of the Newton method is not entirely negated within the \(A\)-variation method framework. In practice, to trace the path of zeros \(t_n(r)\), it is still necessary to segment \(r \in [0,1]\) into smaller intervals \(0 < r_1 < r_2 < \ldots < r_m = 1\), and inductively find \(t_n(r_{j+1})\) as the nearest zero of \(Z_{N(t)}(t; \gamma (r_{j+1}))\) to \(t_n(r_j)\) via the classical Newton method. Thus, the failure of Newton's method, as discussed in Section \ref{s:s6}, primarily concerns its direct application from \(Z_0(t)\) to \(Z(t)\) in a single leap. The \(A\)-variation method, however, allows to facilitate a connection from \(Z_0(t)\) to \(Z(t)\) through more gradual steps, potentially ensuring that the transition at each stage uniquely leads to real zeros.
\end{rem}
 
 \section{Summary and concluding remarks} \label{s:s9} 
 
 The incremental advancements that have been achieved on the RH since its introduction by Riemann in 1859  owe mainly to a tradition of scholarship based on engagement with previous historical mathematical studies on the subject, as well as the exploration of methods for the direct computation of zeta on the critical line, both approaches leading back to Riemann's original work itself. For instance, the discovery of the Riemann-Siegel formula was a direct consequence of Siegel's thorough examination of Riemann's original, unpublished, computational manuscripts. Building on this tradition, Edwards' speculation, through his detailed analysis of the inner-workings of the Riemann-Siegel formula, reminiscent of Siegel's approach to Riemann's work, inspired contemporary questions based on insights that may have been envisioned by Riemann himself. 
 
 At the foundation of Edwards' speculation is the notion that a conceptual, mathematically well-defined link could be established between the zeros of the core \(Z_0(t)\) and those of \(Z(t)\). However, following the introduction of his speculation, Edwards turns to express strong reservations about the practical application of such an approach, while offering only broad and somewhat vague reasons for his doubts.
 
 In the first part of this work, we have systematically analysed Edwards' speculation, identifying in detail two main significant obstacles that hinder its practical application within the original context of the Riemann-Siegel formula. Firstly, we encounter the insurmountable challenge of accounting for the error terms inherent in the Riemann-Siegel formula. Secondly, there is the ambiguity surrounding the specific methodology required to effectively link the core zeros with those of \(Z(t)\). In the process we have showed the inadequacy of methods such as the classical Newton's method in this context.
 
 In the second part of this work, we have adapted Edwards' speculation to the context of Spira's high-order sections, a setting in which, remarkably, most of the obstacles outlined in the first part for the Riemann-Siegel setting are avoided. Spira conducted experimental studies on methods for the direct computation of zeta on the critical line \cite{SP2,SP1}. In his studies, Spira observed that sections $Z_N(t)$ of $Z(t)$ defined in \eqref{eq:Section} could approximate $Z(t)$ in two distinct manners:
\begin{description}  
  \item[(1) Hardy-Littlewood AFE] $Z(t) \approx 2Z_{\widetilde{N}(t)}(t)$ for $\widetilde{N}(t) = \left[ \sqrt{ \frac{t}{2 \pi}} \right]$.
  
  \item[(2) High-order approximation] $Z(t) \approx Z_{N(t)}$ for $N(t)=\left[ \frac{t}{2} \right]$.
  \end{description}

The Hardy-Littlewood AFE is highly classical and lies in the basis of the Riemann-Siegel formula. In comparison, the high-order approximation appears inferior to the AFE in almost all aspects. It is significantly more computationally demanding and, based on standard asymptotic analysis, expected to be less efficient. Nonetheless, through his empirical investigations, Spira suggested that, in contrast to the AFE, high-order approximations possess the sensitivity required to observe the RH directly, eliminating the need for further correction. In \cite{J4}, we have theoretically validated Spira's observations by proving that the RH for Spira's high-order sections is, indeed, equivalent to the RH for \(Z(t)\).

Based on these results, in the current work we have introduced the $A$-variation space $\mathcal{Z}_N$ whose elements are given by 

\begin{equation} 
Z_N(t; \overline{a}) = Z_0(t) + \sum_{k=1}^{N} \frac{a_k}{\sqrt{k+1} } cos ( \theta (t) - ln(k+1) t)
\end{equation}
where $\overline{a}=(a_1,...,a_N) \in \mathbb{R}^N$. This allowed us to prove Theorem \ref{thm:1} which expresses Edwards' speculation, and hence the RH, as an optimization problem requiring the study of the way zeros of \(Z_{N(t)}(t; \overline{a})\) transition from \(Z_0(t)\) to \(Z_{N(t)}(t)\) along various paths in \(\mathcal{Z}_N\) from \(\overline{a}=\overline{0}\) to \(\overline{a}=\overline{1}\), with the aim of finding such paths which avoid collisions between consecutive zeros for any $n \in \mathbb{Z}$.

This new variational viewpoint on the RH leads to several immediate foundational questions. Our initial analysis of the space \(\mathcal{Z}_N\) reveals that the zeros of its elements, despite being infinite in number, exhibit certain characteristics reminiscent of polynomials with real coefficients. Intriguingly, our findings suggest that, akin to polynomials with real coefficients, the space \(\mathcal{Z}_N\) might also be endowed with a well-defined discriminant hypersurface, with the RH deeply intertwined with the geometry of this discriminant. This promising direction is something we aim to delve into in future work.

It should be noted that our approach, while according to Edwards is deeply rooted in Riemann’s own 19th-century investigations, carries a 21st-century flair. Indeed, our strategy of tracing zeros along specific paths within the vast-dimensional space \(\mathcal{Z}_N\), coupled with our optimization approach to the RH, shares similarities with non-linear optimization problems encountered in modern applications. For instance, the use of gradient descent techniques in high-dimensional spaces for training machine learning algorithms \cite{Goodfellow-et-al-2016}. This is not to imply that theoretical exploration of such ideas was previously impossible. However, the capacity for practical investigation of such large dimensional spaces and the non-linear optimization problems defined therein was not readily accessible. For example, while Spira conducted extensive and pioneering numerical investigations of sections of \(Z(t)\) in the 60s and 70s, one might argue that technological limitations, including those related to data visualization, may have prevented him from viewing the problem through a variational lens, studying the zeros of sections \(Z_N(t)\) for given fixed $N \in \mathbb{N}$ as individual objects rather than their dynamic change.

In fact, this intersection of our \(A\)-variation approach with techniques intrinsic to machine learning not only underscores general theoretical similarities but now also opens up new potential venues for studying the RH. Indeed, our \(A\)-variation approach to the RH involves identifying non-colliding paths in \(\mathcal{Z}_N\) for any \(n \in \mathbb{Z}\). In Section \ref{s:ex}, we successfully identified such a non-colliding curve for the specific example of \(n=730120\) through experimentation, pinpointing the shifting parameters by direct methods. The exploration of \(\mathcal{Z}_N\) with machine learning algorithms to discover patterns that can facilitate the identification of such non-colliding paths and their geometry in the general case heralds an intriguing direction for future research. The potential application of these techniques might represent a novel opportunity for contemporary examination of the RH, creating a bridge between classical mathematical conjectures and insights, following Riemann's original explorations, to cutting-edge modern optimization concepts.

\section*{Declarations}

\subsection*{Funding}
No funding was received to assist with the preparation of this manuscript.

\subsection*{Conflicts of Interest/Competing Interests}
The authors declare that they have no conflict of interest or competing interests relevant to the content of this article.

\subsection*{Data Availability}
The authors declare that the data supporting the findings of this study are available within the paper or from the corresponding author upon reasonable request.

\bibliographystyle{plain} 
\bibliography{ES.bib} 

\begin{thebibliography}{10}

\bibitem{Borwein2008Riemann}
P.~Borwein, S.~Choi, B.~Rooney, and A.~Weirathmueller.
\newblock {\em {The Riemann Hypothesis: A Resource for the Afficionado and
  Virtuoso Alike}}.
\newblock Springer, 2008.

\bibitem{CGHJK}
R.~M. Corless, G.~H. Gonnet, D.~E.~G. Hare, D.~J. Jeffrey, and D.~E. Knuth.
\newblock {On the {Lambert} {W} Function}.
\newblock {\em {Advances in Computational Mathematics}}, 5:329--359, 1996.

\bibitem{DB}
N.~G. de~Bruijn.
\newblock {\em {Asymptotic Methods in Analysis}}.
\newblock North-Holland Publishing Company, 1958.

\bibitem{E}
H.~M. Edwards.
\newblock {\em {Riemann's} Zeta Function.}
\newblock Academic Press., 1974.

\bibitem{FL}
G.~Franca and A.~LeClair.
\newblock {\em {Statistical and Other Properties of {Riemann} Zeros Based on an
  Explicit Equation for the N-th Zero on the Critical Line}}.
\newblock 2013.

\bibitem{FL2}
G.~Franca and A.~LeClair.
\newblock {Transcendental Equations Satisfied by the Individual Zeros of
  {Riemann} $\zeta$, {Dirichlet} and Modular $L$-Functions}.
\newblock {\em {Communications in Number Theory and Physics}}, 9(1):1--50,
  2015.

\bibitem{Goodfellow-et-al-2016}
Ian Goodfellow, Yoshua Bengio, and Aaron Courville.
\newblock {\em {Deep Learning}}.
\newblock MIT Press, 2016.

\bibitem{Gourdon2004}
X.~Gourdon.
\newblock {{The \(10^{13}\) First Zeros of the Riemann Zeta Function, and Zeros
  Computation at Very Large Height}}, 2004.
\newblock Available online.

\bibitem{Hardy1915}
G.~H. Hardy.
\newblock {Prime Numbers}.
\newblock {\em {British Association for the Advancement of Science, Report}},
  pages 350--354, 1915.
\newblock Reprinted in Collected Papers of G. H. Hardy, Vol. 2, pp. 14--19.

\bibitem{HL}
G.~H. Hardy and J.~E. Littlewood.
\newblock {The Zeros of {Riemann's} Zeta Function on the Critical Line}.
\newblock {\em {Math. Z.}}, 10:283--317, 1921.

\bibitem{HL2}
G.~H. Hardy and J.~E. Littlewood.
\newblock {The Approximate Functional Equation in the Theory of the Zeta
  Function, with an Application to the Divisor-problems of {Dirichlet} and
  {Piltz}}.
\newblock {\em {Proceedings of the London Mathematical Society}},
  s2-21(1):39--74, 1923.

\bibitem{HL3}
G.~H. Hardy and J.~E. Littlewood.
\newblock {The Approximate Functional Equations for $\zeta(s)$ and
  $\zeta^2(s)$}.
\newblock {\em {Proceedings of the London Mathematical Society}},
  s2-29(1):81--97, 1929.

\bibitem{I}
A.~Ivić.
\newblock {\em {{The Theory of {Hardy's} {Z}-Function}}}.
\newblock Cambridge: Cambridge University Press, 2012.

\bibitem{J}
Y.~Jerby.
\newblock {{An Approximate Functional Equation for the {Riemann} Zeta Function
  with Exponentially Decaying Error}}.
\newblock {\em Journal of Approximation Theory},
  265:https://doi.org/10.1016/j.jat.2021.105551, 2021.

\bibitem{J4}
Y.~Jerby.
\newblock {{On the Approximation of the Hardy {{$Z$}}-Function via High-Order
  Sections}}.
\newblock {\em preprint}, 2024.

\bibitem{Kar}
W.~Karush.
\newblock {{Minima of Functions of Several Variables with Inequalities as Side
  Constraints (M.Sc. thesis)}}.
\newblock {\em Dept. of Mathematics, Univ. of Chicago, Chicago, Illinois},
  1939.

\bibitem{KT}
H.~Kuhn and A.~W. Tucker.
\newblock {{Nonlinear Programming}}.
\newblock {\em Proceedings of 2nd Berkeley Symposium. Berkeley: University of
  California Press}, pages 481--492, 1951.

\bibitem{Landau1908}
E.~Landau.
\newblock {{Nouvelle d\'emonstration pour la formule de {Riemann} sur le nombre
  des nombres premiers inf\'erieurs \`a une limite donn\'ee et d\'emonstration
  d’une formule plus g\'en\'erale pour le cas des nombres premiers d’une
  progression arithm\'etique}}.
\newblock {\em Annales scientifiques de l'\'Ecole Normale Sup\'erieure},
  25:399--442, 1908.

\bibitem{Le}
R.~S. Lehman.
\newblock {{On the Distribution of Zeros of the {Riemann} Zeta Function}}.
\newblock {\em Proc. London Math. Soc.}, 20:303--320, 1970.

\bibitem{Odlyzko1992}
A.~Odlyzko.
\newblock {{The \(10^{20}\)-th Zero of the Riemann Zeta Function and 175
  Million of Its Neighbors}}.
\newblock Unpublished manuscript, 1992.

\bibitem{Od}
A.~M. Odlyzko.
\newblock {{On the Distribution of Spacings Between Zeros of the Zeta
  Function}}.
\newblock {\em Mathematics of Computation}, 48(177):273--308, 1987.

\bibitem{OdlyzkoSchoenhage1988}
A.~M. Odlyzko and A.~Schönhage.
\newblock {{Fast Algorithms for Multiple Evaluations of the Riemann Zeta
  Function}}.
\newblock {\em Trans. Amer. Math. Soc.}, 309(2):797--809, 1988.

\bibitem{PT}
D.~Platt and T.~Trudgian.
\newblock {{The {Riemann} Hypothesis is True up to $3\cdot10^{12}$}}.
\newblock {\em Bulletin of the London Mathematical Society, Wiley}, page
  doi:10.1112/blms.12460, 2021.

\bibitem{Pugh1992}
G.~P. Pugh.
\newblock {{The Riemann-Siegel Formula and Large Scale Computations of the
  Riemann Zeta Function}}.
\newblock Master's thesis, The University of British Columbia, Vancouver, BC,
  1992.

\bibitem{R}
B.~Riemann.
\newblock {{Über die Anzahl der Primzahlen unter einer gegebenen Grösse}}.
\newblock {\em Monatsber. Königl. Preuss. Akad. Wiss. Berlin, 671-680, Nov.
  1859. Reprinted in Das Kontinuum und Andere Monographen (Ed. H. Weyl). New
  York: Chelsea}, 1972.

\bibitem{Ro}
J.~B. Rosser, J.~M. Yohe, and L.~Schoenfeld.
\newblock {{Rigorous Computation and the Zeros of the {Riemann}
  Zeta-Function}}.
\newblock In {\em Cong. Proc. Int. Fed. Information Process.}, pages 70--76,
  Washington, DC, 1969. Spartan.

\bibitem{RHP}
D.~Schumayer and D.~A.~W. Hutchinson.
\newblock {{Physics of the Riemann Hypothesis}}.
\newblock {\em Rev. Mod. Phys.}, 83:307--330, 2011.

\bibitem{Si}
C.~L. Siegel.
\newblock {{Über {Riemanns} Nachlaß zur analytischen Zahlentheorie}}.
\newblock {\em Quellen Studien zur Geschichte der Math. Astron. und Phys. Abt.
  B: Studien 2: 45--80, 1932. (Also in Gesammelte Abhandlungen, Vol. 1. Berlin:
  Springer-Verlag}, 1966.

\bibitem{SP2}
R.~Spira.
\newblock {{Zeros of Sections of the Zeta Function {I}}}.
\newblock {\em Mathematics of Computations}, 18:542--550, 1966.

\bibitem{SP1}
R.~Spira.
\newblock {{Zeros of Approximate Functional Equations}}.
\newblock {\em Mathematics of Computations}, 21(97):41--48, 1967.

\bibitem{SM}
E.~Süli and D.~Mayers.
\newblock {\em {{An Introduction to Numerical Analysis}}}.
\newblock Cambridge University Press, 2003.

\bibitem{Titchmarsh1935}
E.~C. Titchmarsh.
\newblock {{The Zeros of the Riemann Zeta-Function}}.
\newblock {\em Proceedings of the Royal Society of London. Series A,
  Mathematical and Physical Sciences}, 151(873):234--255, 1935.

\bibitem{Turing1953}
A.~M. Turing.
\newblock {{Some Calculations of the Riemann Zeta-Function}}.
\newblock {\em Proceedings of the London Mathematical Society, Third Series},
  3.

\end{thebibliography}

\end{document}